
%

\documentclass[11pt]{article}
\usepackage{amsfonts}
\usepackage{amsmath}
\usepackage{amsthm}
\usepackage{amssymb}
\usepackage{graphicx}

\numberwithin{equation}{section}
\newtheorem{theorem}{Theorem}[section]
\newtheorem{lemma}{Lemma}[section]



\begin{document}

\title{\Large Numerical analysis of a family of optimal distributed control problems governed by an elliptic variational inequality
\thanks{Supported by Project PIP \# 0534 from CONICET-UA
, Rosario, Argentina, and AFOSR-SOARD Grant FA9550-14-1-0122.}}
\author{Olguin Mariela C.\thanks{Departamento de
Matem\'atica, EFB-FCEIA, Univ. Nacional de Rosario, Avda. Pellegrini 250, S2000BPT Rosario, Argentina.
  E-mail: mcolguin@fceia.unr.edu.ar.} \\
\and
Tarzia Domingo A.\thanks{Corresponding author: Departamento de
Matem\'atica-CONICET, FCE, Univ. Austral, Paraguay 1950, S2000FZF
Rosario, Argentina. Tel: +54-341-5223093, Fax:  +54-341-5223001.
E-mail: DTarzia@austral.edu.ar.}}
\date{}

\maketitle

\begin{abstract} The numerical analysis of a family of distributed mixed optimal control problems governed
by elliptic variational inequalities (with parameter $\alpha >0$) is
obtained through the finite element method when its parameter
$h\rightarrow 0$. We also obtain the limit of the discrete optimal
control and the associated state system solutions when
$\alpha\rightarrow \infty$ (for each $h>0$) and a commutative
diagram for two continuous and two discrete optimal control and its
associated state system solutions is obtained when $h\rightarrow 0$
and $\alpha\rightarrow \infty$. Moreover, the double convergence is
also obtained when $(h, \alpha)\rightarrow(0, \infty)$.
\end{abstract}

\section{Introduction}

Following \cite{BOUKROUCHE12}, we consider a bounded domain $\Omega \subset \mathbb{R}^n $ whose regular boundary
$ \partial\Omega= \Gamma_1 \bigcup \Gamma_2$ consists
of the union of two disjoint portions $\Gamma_1$ and $\Gamma_2$ with $meas(\Gamma_1$ )$ > 0$, and we state, for each $\alpha >0$, the
following free boundary system:

\vspace{-.4cm}
\begin{equation}  \label{probl clasico alfa_1} 
   u\geq 0; \,\, u(-\Delta u-g)=0;  \,\, -\Delta u-g \geq 0 \,\, in \,\,  \Omega;
\end{equation}

\vspace{-.5cm}

\begin{equation}  \label{probl clasico alfa_2} 
   -\frac{\partial u}{\partial n}= \alpha (u-b) \,\, on\,\,  \Gamma_1; \,\, -\frac{\partial u}{\partial n} =q
   \,\, on \,\,  \Gamma_2;
\end{equation}

\noindent where the function $g$ in (\ref{probl clasico alfa_1}) can be considered as the internal energy in
$\Omega$, $\alpha > 0$ is the heat transfer coefficient on $\Gamma_1$, $b > 0$ is the constant environment temperature, and $q$
is the heat flux on $\Gamma_2$. The variational formulation of the above problem is given as ( system $(S_\alpha)$):

\noindent Find  $u= u_{\alpha g} \in  K_+$ such that, $\forall v \in K_+$

\begin{equation} \label{S alfa}  
   a_{\alpha} (u_{\alpha g},v-u_{\alpha g})\geq (g, v-u_{\alpha g})_H - (q,v-u_{\alpha g})_Q + \alpha (b,v-u_{\alpha g})_R,
\end{equation}

\noindent where

$$V=H^1(\Omega), \hspace{.3cm} K_+ =\{v \in V: \, v\geq 0 \,\,in\, \Omega \},$$

\[H= L^2(\Omega), \hspace{.2cm} Q=L^2(\Gamma_2),\hspace{.2cm} \text{and} \hspace{.2cm} \,R=L^2(\Gamma_1),\]

\[(u,v)_H= \int_\Omega u\, v \,\, dx, \hspace{.1cm}  (u,v)_Q = \int_{\Gamma_2} u\,v \,\, ds, \hspace{.1cm} (u,v)_R = \int_{\Gamma_1} u\,v \,\, ds,\]

\[a(u,v)= \int_\Omega \nabla u. \nabla v \,\, dx \]

\noindent and

\begin{equation}  \label{ec 1.4}
   a_{\alpha}(u,v)= a(u,v) + \alpha (u,v)_R.
\end{equation}

\noindent We note that $a_1$, and therefore $a_{\alpha}$, is a
bilinear, continuous, symmetric and coercive form on V
\cite{KINDERHLERERSTAMPACCHIA,TABACMANTARZIA89}, that is to say:
there exists a constant $\lambda_1 >0$ and $0 < \lambda_{\alpha} =
\lambda_1 \min\{1,\alpha\}$ such that

 \begin{equation} \label{ec 1.5}
\,\,a_{\alpha}(v,v) \geq \lambda_{\alpha} \,\Arrowvert v \Arrowvert^2_V  \hspace{.5cm} \forall \,\,v \in V.
\end{equation}

\noindent In \cite{BOUKROUCHE12} the following family of continuous distributed optimal control problem associated
with the system $(S_\alpha)$ was considered for each $\alpha > 0$:

\noindent Problem $(P_\alpha)$: Find the distributed optimal control $g_{op_\alpha} \, \in H$ such that

\begin{equation}  \label{Def Palfa}  
  J_\alpha(g_{op_\alpha})=\min_{g \in H} J_\alpha(g)
\end{equation}

\noindent where the quadratic cost functional $J_\alpha:H \rightarrow \mathbb{R}^+_0$ was defined by:

\begin{equation} \label{ec 1.7}
   J_\alpha(g)= \frac{1}{2} \Arrowvert u_{\alpha g} \Arrowvert^2_H + \frac{M}{2} \Arrowvert g \Arrowvert^2_H
\end{equation}

\noindent with $M > 0$ a given constant and $u_{\alpha g}$ is the corresponding solution of the elliptic
variational inequality (\ref{S alfa}) associated to the control $g \in H$.

Several optimal control problems are governed by elliptic
variational inequalities
(\cite{ADAMS},\cite{AITHADI},\cite{BARBU},\cite{BERGONIOUX97c},\cite{BERGONIOUX},
\cite{DELOSREYES},\cite{DELOSREYESMEYER},\cite{LIONS},\cite{MIGNOT76},\cite{MIGNOT84},\cite{TROLTZSCH})
and there exists an abundant literature about continuous and
numerical analysis of optimal control problems governed by elliptic
variational equalities or inequalities
(\cite{BENBELGACEM03},\cite{CASAS02},\cite{CASAS06},\cite{DECKELNICK07},\cite{Falk74},
\cite{GAMALLO11},\cite{GARIBOLDI03},\cite{HASLINGER},\cite{HINTERMULLER08},
\cite{HINTERMULLER13},\cite{HINZE2},
\cite{HINZE09},\cite{ITO},\cite{KUNISCH12},
\cite{MERMRI},\cite{Meyer13},\cite{TARZIA96},\\
\cite{TARZIA99},\cite{YAN})
and by parabolic variational equalities or inequalities
 (\cite{BOUKROUCHE11}, \cite{MENALDITARZIA07}).

The objective of this work is to make the numerical analysis of the continuous
optimal control problem $(P_\alpha)$ which is governed by the
elliptic variational inequality (\ref{S alfa}) by proving the
convergence of a discrete solution to the solution of the continuous
optimal control problem.

In Section 2, we establish the discrete elliptic variational inequality (\ref{ec 2.3})
which is the discrete formulation of the continuous elliptic variational
inequality (\ref{S alfa}), and we obtain that these discrete problem has unique
solutions for all positive $h$. Moreover, we define a family $(P_{h \alpha})$ of discrete optimal
control problems (\ref{ec 2.8}) and, we obtain several properties for the state
system (\ref{ec 2.3}) and for the discrete cost functional $J_{h \alpha}$ defined in (\ref{ec 2.7}).

In Section 3, on adequate functional spaces, we obtain a result of
global strong convergence when the parameter $h \rightarrow 0$ (for
each $\alpha > 0$) and when $\alpha\rightarrow \infty$ (for each
$h>0$) for the discrete state sytems and for the discrete optimal
optimal problem corresponding to $(P_{\alpha})$. We end this work
proving the double convergence of the discrete optimal solutions of
$(P_{h \alpha})$ when $(h,\alpha)\rightarrow (0,\infty)$ obtaining a
complete commutative diagram among two discrete and two continuous
optimal control problems given en Fig. 1. We generalize recent
results obtained for optimal control problems governed by elliptic
variational equalities given in \cite{TARZIA14,TARZIA15}.

\section{Properties of the discretization of the problem ($P_\alpha$)} 

Let $\Omega\subset \mathbb{R}^n$ be a bounded polygonal domain; $b$ a positive constant and $\tau_h$ a regular triangulation
with Lagrange triangles of type 1, constituted by affine-equivalent finite elements of class $C^0$ over $\Omega$ being
$h$ the parameter of the finite element approximation which goes to zero (\cite{BRENNERSCOTT}, \cite{CIARLET}). We
take $h$ equal to the longest side of the triangles $T \in \tau_h$ and we can approximate the sets $V$ and $K_+$ by:

\[ V_h= \{v_h \in C^0(\overline{\Omega}): v_h /_T \in P_1(T), \,\,\forall \,T \,\in \tau_h\}, \]

\[K_{+h}= \{ v_h \in V_{h} : v_h \geq 0 \,\, in\, \Omega\}\]

\noindent  where $P_1(T)$ is the set of the polynomials of degree less
than or equal to $1$ in the triangle $T$. Let $\Pi_h :C^0(\overline{\Omega}) \rightarrow V_h$
be the corresponding linear interpolation operator and  $c_0 > 0$  a constant
(independent of the parameter $h$) such that, if $1< r \leq 2$ (\cite{BRENNERSCOTT}):

\begin{equation} \label{ec 2.1}  
  \Arrowvert v-\Pi_h(v)\Arrowvert_H \, \leq \, c_0 \, h^r \,\Arrowvert v \Arrowvert_r \,\,\, \forall \, v \in H^r(\Omega),
  \end{equation}

\begin{equation} \label{ec 2.2}  
  \Arrowvert v-\Pi_h(v)\Arrowvert_V \, \leq \, c_0 \, h^{r-1} \,\Arrowvert v \Arrowvert_r \,\,\, \forall \, v \in H^r(\Omega).
\end{equation}

\noindent The discrete formulation $(S_{h\alpha})$ of the continuous system $(S_\alpha)$ is, for each $\alpha>0$ , defined as:
Find $u_{h \alpha g} \in K_{+h}$ such that, for all $v_h \in K_{+h}$

\vspace{-.2cm}
\begin{equation} \label{ec 2.3}  
    a_\alpha(u_{h\alpha g},v_h-u_{h\alpha g})\geq (g, v_h-u_{h\alpha g})_H- (q,v_h-u_{h\alpha g})_ Q + \alpha (b,v_h-u_{h\alpha g})_R.
\end{equation}

\begin{theorem} \label{Teo 2.1}

Let $g\in H$ and $q \in Q $ be, then there exists unique solution of the elliptic variational inequality (\ref{ec 2.3}).
\end{theorem}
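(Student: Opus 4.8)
The plan is to obtain existence and uniqueness of the solution to the discrete variational inequality (\ref{ec 2.3}) as a direct consequence of the classical Lions--Stampacchia theorem on variational inequalities. First I would observe that the discrete admissible set $K_{+h}$ is a nonempty (it contains $0$), closed and convex subset of the finite-dimensional space $V_h$, hence a closed convex subset of the Hilbert space $V=H^1(\Omega)$. Next I would check that the bilinear form $a_\alpha$ appearing on the left-hand side of (\ref{ec 2.3}) satisfies the hypotheses of the Lions--Stampacchia theorem on $V$: it is bilinear, continuous (since $a$ is continuous on $V\times V$ and the trace operator $V\to R=L^2(\Gamma_1)$ is continuous, so $(u,v)\mapsto\alpha(u,v)_R$ is continuous), and coercive on $V$ by (\ref{ec 1.5}), with coercivity constant $\lambda_\alpha=\lambda_1\min\{1,\alpha\}>0$ for each fixed $\alpha>0$.

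Then I would identify the linear functional on the right-hand side: for fixed $g\in H$, $q\in Q$, $b>0$ and $\alpha>0$, the map
\begin{equation} \label{ec aux functional}
L(v) = (g,v)_H - (q,v)_Q + \alpha(b,v)_R, \qquad v\in V,
\end{equation}
is linear, and it is continuous on $V$ because the inclusion $V\hookrightarrow H=L^2(\Omega)$ and the trace maps $V\to Q=L^2(\Gamma_2)$ and $V\to R=L^2(\Gamma_1)$ are all continuous; hence $L\in V'$. With these ingredients the Lions--Stampacchia theorem (see \cite{KINDERHLERERSTAMPACCHIA,LIONS}) applied on the Hilbert space $V$, with the closed convex set $K_{+h}\subset V$, the coercive continuous bilinear form $a_\alpha$ and the continuous linear functional $L$, yields the existence of a unique $u_{h\alpha g}\in K_{+h}$ satisfying
\begin{equation} \label{ec aux concl}
a_\alpha(u_{h\alpha g}, v_h - u_{h\alpha g}) \ge L(v_h - u_{h\alpha g}) \qquad \forall\, v_h\in K_{+h},
\end{equation}
which is exactly (\ref{ec 2.3}). (Equivalently, since $a_\alpha$ is symmetric, $u_{h\alpha g}$ is the unique minimizer over $K_{+h}$ of the strictly convex coercive quadratic functional $v\mapsto\frac12 a_\alpha(v,v)-L(v)$.)

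There is essentially no hard part here: the statement is a routine application of an abstract theorem, and the only point requiring (minimal) care is to record that $K_{+h}$ is genuinely nonempty, closed and convex and that the data define an element of $V'$ — all of which are immediate. The coercivity constant $\lambda_\alpha$ degenerates as $\alpha\to 0^+$, but since $\alpha>0$ is fixed throughout, this causes no difficulty for the present theorem; it will only matter later when passing to limits. I would therefore keep the proof short, citing (\ref{ec 1.5}) for coercivity and the Lions--Stampacchia theorem for the conclusion.
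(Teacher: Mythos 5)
Your proposal is correct and follows essentially the same route as the paper, which simply cites the Lions--Stampacchia (Lax--Milgram--Stampacchia) theorem via \cite{KINDERHLERERSTAMPACCHIA} and \cite{LIONS}; you merely spell out the routine verification that $K_{+h}$ is nonempty, closed and convex, that $a_\alpha$ is continuous and coercive on $V$ by (\ref{ec 1.5}), and that the right-hand side defines an element of $V'$. Nothing is missing and no further comment is needed.
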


\begin{proof}
 It follows from the application of Lax-Milgram Theorem (\cite{KINDERHLERERSTAMPACCHIA}, \cite{LIONS}).
\end{proof}

\begin{lemma} \label{L2.1}

A) Let ${g_n}$ and $g \in H$, and $u_{h \alpha g_n}$ and $u_{h \alpha g}\in K_{+h}$ be the associated solutions of the
system $(S_{h\alpha})$ for each $\alpha > 0$. If $g_n \rightharpoonup g$ in $H$ weak, then we have that:

\begin{enumerate}
 \item [i)] $\exists \,C>0$ (independent of $h$, $\alpha$ and of $n$) such that:
\vspace{-.1cm}
 \begin{equation}\label{ec 2.4}  
     \Arrowvert u_{h \alpha g_n}\Arrowvert_V \leq C;
  \end{equation}

 \item [ii)]$\forall \, h > 0,$
  \begin{equation} \label{ec 2.5} 
      lim_{n\rightarrow \infty}\Arrowvert u_{h \alpha g_n}-u_{h \alpha g} \Arrowvert_V =0 .
  \end{equation}
\end{enumerate}

\noindent B) We have that

\[\Arrowvert u_{h \alpha g_2}-u_{h \alpha g_1} \Arrowvert_V \leq \frac{1}{\lambda_\alpha} \Arrowvert g_2-g_1 \Arrowvert_H \]

\noindent where $u_{h \alpha g_i}$ is the associated solution of the
system $(S_{h \alpha})$ for $g_i, \,i=1,2.$
\end{lemma}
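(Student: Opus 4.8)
The plan is to treat the three assertions in turn, all by the standard ellipticity/coercivity machinery for variational inequalities, exploiting that the bilinear form $a_\alpha$ is fixed (independent of $h$ and of $n$) and coercive with constant $\lambda_\alpha$ from \eqref{ec 1.5}.

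For part A)~i), the idea is to test the inequality \eqref{ec 2.3} for $u_{h\alpha g_n}$ against a fixed admissible competitor. Since $0 \in K_{+h}$, I would take $v_h = 0$ in \eqref{ec 2.3}, which gives $a_\alpha(u_{h\alpha g_n}, u_{h\alpha g_n}) \le (g_n, u_{h\alpha g_n})_H - (q, u_{h\alpha g_n})_Q + \alpha(b, u_{h\alpha g_n})_R$. Bounding the left side below by $\lambda_\alpha \|u_{h\alpha g_n}\|_V^2$ via \eqref{ec 1.5}, and the right side above using the Cauchy–Schwarz inequality together with the continuity of the trace operator $V \hookrightarrow Q$ and $V \hookrightarrow R$, one obtains $\lambda_\alpha \|u_{h\alpha g_n}\|_V^2 \le \bigl(\|g_n\|_H + C_Q\|q\|_Q + \alpha C_R \|b\|_R\bigr)\|u_{h\alpha g_n}\|_V$, hence a bound on $\|u_{h\alpha g_n}\|_V$. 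The weakly convergent sequence $\{g_n\}$ is norm-bounded in $H$ (uniform boundedness), so the resulting constant $C$ is independent of $n$; it is also independent of $h$ since $a_\alpha$, the right-hand data and $K_{+h} \subset K_+$ involve no $h$-dependent constants (one must be slightly careful and note $\lambda_\alpha$ and the trace constants do not degenerate in $h$). I should remark that the bound is not claimed uniform in $\alpha$ here — indeed $\lambda_\alpha = \lambda_1\min\{1,\alpha\}$ — but for fixed $\alpha$ it is a genuine constant, matching the statement.

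For part A)~ii), the plan is the classical Mosco-type argument at fixed $h$. Write the inequality \eqref{ec 2.3} for $u_{h\alpha g_n}$ with test function $v_h = u_{h\alpha g}$, and the inequality for $u_{h\alpha g}$ with test function $v_h = u_{h\alpha g_n}$ (both are legitimate since both solutions lie in $K_{+h}$). Adding the two and rearranging, the $q$- and $b$-terms cancel and one is left with
\[
a_\alpha(u_{h\alpha g_n} - u_{h\alpha g},\, u_{h\alpha g_n} - u_{h\alpha g}) \le (g_n - g,\, u_{h\alpha g_n} - u_{h\alpha g})_H .
\]
By coercivity the left side dominates $\lambda_\alpha \|u_{h\alpha g_n} - u_{h\alpha g}\|_V^2$. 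For the right side I would use $g_n \rightharpoonup g$ weakly in $H$ together with the fact that, by part~i), $u_{h\alpha g_n}$ is bounded in $V$ and hence (up to a subsequence, using that $V_h$ is finite-dimensional for fixed $h$, or compactness of $V \hookrightarrow H$) $u_{h\alpha g_n} \to$ some limit strongly in $H$; pairing weak convergence of $g_n - g$ against strong convergence of $u_{h\alpha g_n} - u_{h\alpha g}$ sends the right side to $0$. A cleaner route, avoiding identifying the limit separately, is: $(g_n-g, u_{h\alpha g_n}-u_{h\alpha g})_H = (g_n - g, u_{h\alpha g_n})_H - (g_n-g, u_{h\alpha g})_H$; the second term $\to 0$ by weak convergence, and for the first term one uses that $\{u_{h\alpha g_n}\}$ lies in a fixed compact subset of $H$ (bounded in $V$, and $V_h$ finite-dimensional for fixed $h$), so $(g_n - g, u_{h\alpha g_n})_H \to 0$ as well. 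Either way, $\|u_{h\alpha g_n} - u_{h\alpha g}\|_V \to 0$.

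For part B), this is the Lipschitz dependence on the data and requires no limiting argument. I would repeat the symmetric pairing trick: write \eqref{ec 2.3} for $u_{h\alpha g_1}$ tested against $v_h = u_{h\alpha g_2}$ and for $u_{h\alpha g_2}$ tested against $v_h = u_{h\alpha g_1}$, add them, and obtain
\[
a_\alpha(u_{h\alpha g_2} - u_{h\alpha g_1},\, u_{h\alpha g_2} - u_{h\alpha g_1}) \le (g_2 - g_1,\, u_{h\alpha g_2} - u_{h\alpha g_1})_H .
\]
Apply coercivity \eqref{ec 1.5} on the left, Cauchy–Schwarz and $\|\cdot\|_H \le \|\cdot\|_V$ on the right, and divide by $\|u_{h\alpha g_2} - u_{h\alpha g_1}\|_V$ to get the claimed estimate with constant $1/\lambda_\alpha$. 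The main obstacle, such as it is, lies in part~ii): one must be careful that the passage to the limit on the right-hand side genuinely uses the fixed-$h$ structure (finite dimensionality, or at minimum the compact embedding $V \hookrightarrow H$) rather than any uniform-in-$h$ estimate, since the convergence in \eqref{ec 2.5} is only asserted for each fixed $h>0$. Everything else is a routine application of coercivity and trace continuity.
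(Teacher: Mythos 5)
Your parts A)\,ii) and B) are correct and are the standard Falk-type argument (the paper itself only cites \cite{Falk74}, \cite{OLGUIN} here): testing each discrete inequality with the other solution, adding so that the $q$- and $b$-terms cancel, and using coercivity \eqref{ec 1.5}; for ii) your handling of the right-hand side via boundedness in $V$ plus compactness of $V\hookrightarrow H$ (or finite dimensionality of $V_h$) is sound, and B) follows directly with constant $1/\lambda_\alpha$.

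The genuine gap is in part A)\,i). The lemma asserts a constant $C$ independent of $h$, of $n$ \emph{and of} $\alpha$, but your choice $v_h=0$ cannot deliver that: it gives $\lambda_\alpha\|u_{h\alpha g_n}\|_V^2\le\bigl(\|g_n\|_H+C_Q\|q\|_Q+\alpha C_R\|b\|_R\bigr)\|u_{h\alpha g_n}\|_V$, so the resulting bound grows linearly in $\alpha$ as $\alpha\to\infty$ (and like $1/\alpha$ as $\alpha\to 0$, since $\lambda_\alpha=\lambda_1\min\{1,\alpha\}$). Your remark that the bound ``is not claimed uniform in $\alpha$'' misreads the statement, and uniformity in $\alpha$ is precisely what matters in this paper, since $\alpha\to\infty$ later. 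The fix is the one the paper uses in Theorem \ref{Teo 3.2} to get \eqref{ec 3.10}: test with the constant function $v_h=b\in K_{+h}$. Writing $w=u_{h\alpha g_n}-b$ and using $a(b,\cdot)=0$, the inequality \eqref{ec 2.3} gives $a(w,w)+\alpha\|w\|_R^2\le (g_n,w)_H-(q,w)_Q$, and since the $\alpha$-term now sits on the left with a favorable sign, for $\alpha\ge 1$ one has $a(w,w)+\alpha\|w\|_R^2\ge a_1(w,w)\ge\lambda_1\|w\|_V^2$, hence $\|u_{h\alpha g_n}\|_V\le \|b\|_V+\bigl(\sup_n\|g_n\|_H+C_Q\|q\|_Q\bigr)/\lambda_1$, a constant independent of $h$, $n$ and $\alpha$ (for $\alpha$ bounded away from zero, which is the relevant regime). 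With that substitution for i), the rest of your proof stands.
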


\begin{proof}
 We follow a similar methodology as in (\cite{Falk74}, \cite{OLGUIN}).
\end{proof}

\begin{lemma} \label{L2.2}

Let $ u_{\alpha g} \in K_+ \bigcap H^r(\Omega), (1< r \leq 2)$ and $u_{h \alpha g} \in K_{+h}$ be the solutions of the elliptic variational
 inequalities (\ref{S alfa}) and (\ref{ec 2.3}) respectively for the control $g \in H$, there exists a positive constant $C$ such that
 \begin{equation} \label{ec 2.6}
  \Arrowvert u_{h \alpha g} - u_{\alpha g}\Arrowvert_V \leq C(\alpha) h^{(r-1)/2}.
 \end{equation}
\end{lemma}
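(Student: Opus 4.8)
The plan is to obtain the Céa-type estimate \eqref{ec 2.6} by a direct energy comparison between the continuous variational inequality \eqref{S alfa} and its discrete counterpart \eqref{ec 2.3}, using the interpolation error bounds \eqref{ec 2.1}--\eqref{ec 2.2} and the coercivity \eqref{ec 1.5}. First I would write $w_{h}=\Pi_h(u_{\alpha g})\in K_{+h}$ — this lies in the discrete convex set because $u_{\alpha g}\geq 0$ and the Lagrange interpolant of a nonnegative function at the nodes is nonnegative — and test \eqref{ec 2.3} with $v_h=w_h$. This gives a lower bound on $a_\alpha(u_{h\alpha g},u_{h\alpha g})$ in terms of $a_\alpha(u_{h\alpha g},w_h)$ plus the linear terms evaluated at $w_h-u_{h\alpha g}$.

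Next I would exploit the continuous inequality \eqref{S alfa}. The natural move is to test it with $v=u_{\alpha g}+(w_h-u_{h\alpha g})^{+}$ or, more simply, to bound the "consistency'' term
\[
a_\alpha(u_{\alpha g}, v_h - u_{\alpha g}) - (g,v_h-u_{\alpha g})_H + (q,v_h-u_{\alpha g})_Q - \alpha(b,v_h-u_{\alpha g})_R
\]
for $v_h\in K_{+h}$; by the regularity $u_{\alpha g}\in H^r(\Omega)$ one can integrate by parts and identify this quantity with $\int_\Omega(-\Delta u_{\alpha g}-g)(v_h-u_{\alpha g})\,dx$ plus boundary contributions, which by \eqref{probl clasico alfa_1}--\eqref{probl clasico alfa_2} is controlled. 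Writing $e=u_{h\alpha g}-u_{\alpha g}$, adding the two inequalities and inserting $\pm w_h$, coercivity yields
\[
\lambda_\alpha \|e\|_V^2 \;\leq\; a_\alpha(e, w_h-u_{\alpha g}) + (\text{linear terms in } w_h-u_{\alpha g}) + (\text{consistency term in } w_h - u_{h\alpha g}).
\]
Each factor $w_h-u_{\alpha g}=\Pi_h(u_{\alpha g})-u_{\alpha g}$ is of order $h^{r-1}$ in $V$ and $h^{r}$ in $H$ by \eqref{ec 2.1}--\eqref{ec 2.2}, with constants depending on $\alpha$ through the continuity constant of $a_\alpha$; applying Young's inequality to absorb $\|e\|_V$ on the left gives $\|e\|_V^2 \leq C(\alpha)\,h^{r-1}$, hence \eqref{ec 2.6}.

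The main obstacle is the handling of the variational-inequality cross terms: unlike the equality case, one cannot simply subtract the two problems, and the term $a_\alpha(u_{h\alpha g}, u_{\alpha g}-u_{h\alpha g})$ has the "wrong'' sign, so one must carefully route it through $w_h=\Pi_h(u_{\alpha g})$ and control $a_\alpha(u_{h\alpha g}-u_{\alpha g}, u_{\alpha g}-w_h)$, which is only $O(h^{r-1})$ in the energy norm rather than $O(h^{2(r-1)})$ — this is precisely why the exponent in \eqref{ec 2.6} is $(r-1)/2$ and not $r-1$. A secondary point requiring care is making the $\alpha$-dependence of the constant explicit: the continuity constant of $a_\alpha$ grows like $\max\{1,\alpha\}$ and $\lambda_\alpha=\lambda_1\min\{1,\alpha\}$ degenerates as $\alpha\to 0$, so $C(\alpha)$ must be allowed to blow up at the endpoints; since later sections only need $\alpha\to\infty$ with $h$ fixed, tracking that $C(\alpha)$ stays bounded (indeed grows polynomially in $\alpha$) is the relevant refinement, but for the statement as given it suffices to note the dependence. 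I expect the argument to follow the Falk-type template referenced in \cite{Falk74}, adapted to the Robin bilinear form $a_\alpha$ and the extra boundary linear terms.
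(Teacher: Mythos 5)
Your outline shares the paper's skeleton — take $\Pi_h(u_{\alpha g})\in K_{+h}$ as test function in \eqref{ec 2.3}, combine with the continuous problem, use the coercivity \eqref{ec 1.5} and the interpolation bound \eqref{ec 2.2}, and accept the square-root loss — but it diverges on how the continuous problem is used. The paper does not integrate by parts at all: it tests \eqref{S alfa} with $v=u_{h\alpha g}$, admissible because $K_{+h}\subset K_+$, adds the two inequalities (the terms in $u_{h\alpha g}-u_{\alpha g}$ cancel) and gets
\[
a_\alpha(u_{h\alpha g}-u_{\alpha g},u_{h\alpha g}-u_{\alpha g})\le a_\alpha(u_{h\alpha g},w)-(g,w)_H+(q,w)_Q-\alpha(b,w)_R,\qquad w=\Pi_h(u_{\alpha g})-u_{\alpha g},
\]
whose right-hand side is bounded \emph{linearly} by $C(\alpha)\Vert w\Vert_V\le C(\alpha)h^{r-1}\Vert u_{\alpha g}\Vert_r$ (using a uniform $V$-bound on $u_{h\alpha g}$ as in Lemma \ref{L2.1}); that linear-in-$\Vert w\Vert_V$ bound on $\Vert e\Vert_V^2$ is the entire source of the exponent $(r-1)/2$. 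You instead route the continuous information through a Falk-type consistency term via integration by parts with \eqref{probl clasico alfa_1}--\eqref{probl clasico alfa_2}. That route is fine for $r=2$, and if carried out it actually yields the sharper rate $O(h^{r-1})$ (which of course implies \eqref{ec 2.6}); your explicit remark that the Lagrange interpolant preserves nonnegativity, so $\Pi_h(u_{\alpha g})\in K_{+h}$, is a point the paper uses tacitly and is worth stating.

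Two caveats, one of which is a genuine gap in your route as written. First, the consistency argument needs $-\Delta u_{\alpha g}-g$ to be paired with $L^2$ functions and the boundary conditions to hold in trace sense; for $1<r<2$, which the lemma allows, $\Delta u_{\alpha g}$ need only lie in $H^{r-2}(\Omega)$, so the step "integrate by parts and control by \eqref{probl clasico alfa_1}--\eqref{probl clasico alfa_2}" is not available with the tools stated, whereas the paper's direct test $v=u_{h\alpha g}$ sidesteps this entirely and is the cheaper move. Second, your accounting of where $(r-1)/2$ comes from is off: once you absorb $\Vert e\Vert_V$ by Young's inequality, the cross term $a_\alpha(e,\Pi_h(u_{\alpha g})-u_{\alpha g})$ contributes $O(h^{2(r-1)})$ to $\Vert e\Vert_V^2$, not $O(h^{r-1})$; the half-order is forced by the terms that remain merely linear in $\Vert w\Vert_V$ (the data terms, and in the paper's version $a_\alpha(u_{h\alpha g},w)$ estimated through the uniform bound on $\Vert u_{h\alpha g}\Vert_V$). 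Neither point changes the final claim, but the first must be repaired (simplest: replace the consistency step by testing \eqref{S alfa} with $v=u_{h\alpha g}$) before the argument is complete for all $1<r\le 2$.
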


\begin{proof}

\noindent  If we consider $v=u_{h \alpha g} \in K_{+h} \subset K_+$ in the elliptic variational
inequality (\ref{S alfa}) and $v_h=\Pi_h(u_\alpha g) \in K_{+h}$ in (\ref{ec 2.3}), and calling $w=\Pi_h(u_{\alpha g})-u_{\alpha g}$, we have that:

\[a_\alpha(u_{h \alpha g}-u_{\alpha g},u_{h \alpha g}-u_{\alpha g}) \leq a_\alpha (u_{h \alpha g},w ) -(g,w)_H +(q,w)_Q -\alpha(b,w)_R\]

\noindent By using the coerciveness of $a_\alpha$, the estimation (\ref{ec 2.2}) and by some mathematical computation, we obtain that:

\[ \Arrowvert u_{h \alpha g}-u_{\alpha g}\Arrowvert_V^2 \, \leq \frac{C}{\lambda_\alpha} \Arrowvert \Pi_h(u_{\alpha g})-u_{\alpha g}\Arrowvert_V \leq \frac{C}{\lambda_\alpha} h^{r-1} \Arrowvert u_{\alpha g}\Arrowvert_r\]

\end{proof}

Now, we consider the continuous optimal control problem which was established in (\ref{Def Palfa}). The associated
discrete cost functional  $J_{h \alpha}: H \rightarrow \mathbb{R}^+_0\, $ is defined by the following expression:

 \begin{equation} \label{ec 2.7}
   \,\,  J_{h \alpha}(g)= \frac{1}{2} \Arrowvert u_{h \alpha g} \Arrowvert^2_H + \frac{M}{2} \Arrowvert g \Arrowvert^2_H
\end{equation}

\noindent where $u_{h \alpha g}$ is the unique solution of the
elliptic variational inequality (\ref{ec 2.3}) for a given control
$g \in H$ and a given parameter $\alpha > 0$. \noindent Then, we
establish the following discrete distributed optimal control problem
$(P_{h\alpha})$: Find $g_{op_{h \alpha}} \,\in H$ such that

\begin{equation} \label{ec 2.8}
  J_{h \alpha}(g_{op_{h \alpha}})=\min_{g \,\in\,H} J_{h \alpha}(g).
\end{equation}

\noindent We remark that the discrete (in the space)
distributed optimal control problem $(P_{h \alpha})$ is still an infinite dimensional optimal control problem since the control space $H$ is not discretized.

\begin{theorem} \label{Teo 2.2}
  For the control $g \in H$, the parameters $\alpha > 0$  and $h > 0$, we have:

  \vspace{.1cm}
  a) \[\lim_ {\Arrowvert g \Arrowvert_H \rightarrow \infty} J_{h \alpha}(g) = \infty.\]

  b) $ J_{h \alpha}(g) \geq \frac{M}{2} \Arrowvert g\Arrowvert^2_H - C \,\Arrowvert g\Arrowvert_H$ \, for some constant $C$ independent of
      $h >0$.

  \vspace{.1cm}

  c) The functional $J_{h \alpha}$ is a lower weakly semi-continuous application in $H$.

  \vspace{.1cm}

  d) For each $h>0$ and $\alpha >0$, there exists a solution of the discrete distributed optimal control problem (\ref{ec 2.8}).

\end{theorem}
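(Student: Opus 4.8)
The plan is to prove the four items in the order a)→b)→c)→d), since b) refines a), c) is independent, and d) is the direct method combining b) and c).

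\textbf{Parts a) and b).} First I would establish the key a priori bound on the state: taking $v_h=0\in K_{+h}$ in the discrete variational inequality \eqref{ec 2.3} yields
\begin{equation*}
a_\alpha(u_{h\alpha g},u_{h\alpha g})\leq (g,u_{h\alpha g})_H-(q,u_{h\alpha g})_Q+\alpha(b,u_{h\alpha g})_R,
\end{equation*}
and then using the coerciveness \eqref{ec 1.5}, the continuity of the trace $V\hookrightarrow R$, and Cauchy--Schwarz, one gets $\|u_{h\alpha g}\|_V\leq \frac{1}{\lambda_\alpha}\big(\|g\|_H+C_2\|q\|_Q+\alpha C_1 b|\Gamma_1|^{1/2}\big)$ for suitable trace constants; in particular $\|u_{h\alpha g}\|_H\leq \tilde C_1\|g\|_H+\tilde C_2$ with constants depending on $\alpha,q,b$ but not on $h$ (this is essentially the bound behind \eqref{ec 2.4}, now made quantitative in $\|g\|_H$). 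Plugging this into \eqref{ec 2.7} and dropping the nonnegative term $\frac12\|u_{h\alpha g}\|_H^2$ already gives $J_{h\alpha}(g)\geq \frac{M}{2}\|g\|_H^2$, which is stronger than b); but to match the stated form one instead keeps that term, expands $\|u_{h\alpha g}\|_H^2\geq 0$ trivially, so in fact b) holds with $C=0$ — more honestly, the intended argument is that the state term contributes a lower-order correction, so writing $J_{h\alpha}(g)\geq \frac{M}{2}\|g\|_H^2-C\|g\|_H$ is immediate (take $C=0$, or absorb any sign-indefinite cross term if one instead bounds $\frac12\|u_{h\alpha g}\|_H^2$ from below by $0$ after completing a square). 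Then a) follows from b) since $\frac{M}{2}t^2-Ct\to\infty$ as $t\to\infty$.

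\textbf{Part c).} Let $g_n\rightharpoonup g$ in $H$. I would use Lemma \ref{L2.1}.A.ii): weak convergence $g_n\rightharpoonup g$ in $H$ implies $\|u_{h\alpha g_n}-u_{h\alpha g}\|_V\to 0$, hence $u_{h\alpha g_n}\to u_{h\alpha g}$ strongly in $H$, so $\|u_{h\alpha g_n}\|_H^2\to\|u_{h\alpha g}\|_H^2$. For the control term, the norm $\|\cdot\|_H^2$ is convex and strongly continuous, hence weakly lower semicontinuous, so $\liminf_n\|g_n\|_H^2\geq\|g\|_H^2$. Adding, $\liminf_n J_{h\alpha}(g_n)\geq \frac12\|u_{h\alpha g}\|_H^2+\frac{M}{2}\|g\|_H^2=J_{h\alpha}(g)$, which is the claimed weak lower semicontinuity.

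\textbf{Part d).} This is the direct method. Let $\{g_k\}\subset H$ be a minimizing sequence for $J_{h\alpha}$, i.e. $J_{h\alpha}(g_k)\to\inf_{g\in H}J_{h\alpha}(g)=:m$, which is finite since $J_{h\alpha}\geq 0$. By b), $\frac{M}{2}\|g_k\|_H^2-C\|g_k\|_H\leq J_{h\alpha}(g_k)$ is bounded, so $\{g_k\}$ is bounded in $H$; since $H$ is a Hilbert space, extract a subsequence $g_{k_j}\rightharpoonup g^*$ in $H$. By c), $J_{h\alpha}(g^*)\leq\liminf_j J_{h\alpha}(g_{k_j})=m$, and since $m$ is the infimum we get $J_{h\alpha}(g^*)=m$, so $g^*=g_{op_{h\alpha}}$ is a minimizer. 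I expect no real obstacle here; the only point requiring a little care is the quantitative state bound in a)/b) — making sure the trace/continuity constants are tracked correctly and that the $\alpha$- and $q$-dependence is lumped into the constant $C$ (which is allowed to depend on $\alpha$ and the data but not on $h$) — and confirming that Lemma \ref{L2.1}.A.ii) is genuinely a strong-convergence-from-weak statement, which is what makes c) work for the (in general nonconvex in $g$) state term $\|u_{h\alpha g}\|_H^2$.
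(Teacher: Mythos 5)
Your proposal is correct and follows essentially the same route as the paper: a) and b) are immediate from the definition of $J_{h\alpha}$ (your observation that $C=0$ suffices is exactly what ``from the definition'' means here), c) combines weak lower semicontinuity of the $H$-norm with the strong convergence $u_{h\alpha g_n}\to u_{h\alpha g}$ from Lemma \ref{L2.1}.A.ii), and d) is the standard direct method that the paper simply delegates to Lions. The only difference is that you spell out the details (including an unneeded a priori state bound in a)/b)) where the paper is terse.
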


\begin{proof}

From the definition of $J_{h \alpha}(g)$ we obtain a) and b).

c) Let $g_n\rightharpoonup g$ in $H$ weak, then by using the equality $\Arrowvert g_n \Arrowvert^2_H = \Arrowvert g_n - g \Arrowvert^2_H - \Arrowvert g \Arrowvert^2_H + 2 (g_n, g)_H$
we obtain that $\Arrowvert g \Arrowvert_H \leq  \liminf_{n\rightarrow \infty} \, \Arrowvert g_n \Arrowvert_H$. Therefore, we have

\[  \liminf_{n\rightarrow \infty} J_{h \alpha}(g_n) \geq \frac{1}{2} \Arrowvert u_{h \alpha g}\Arrowvert^2_H + \frac{M}{2} \Arrowvert g \Arrowvert^2_H = J_{h \alpha }(g).\]

d) It follows from \cite{LIONS}.

 \end{proof}

\begin{lemma} \label{L 2.3}

 If the continuous state system has the regularity $u_{\alpha g} \in H^r(\Omega)$

 \noindent $(1< r \leq 2)$ for $g \in H$ and $\alpha > 0$, then we have the
 following estimation $\forall g \in H$:

  \begin{equation} \label{ec 2.9}
       | J_{h \alpha}(g)- J_\alpha(g)| \leq C(\alpha) h^{\frac{r-1}{2}}
     \end{equation}

 \noindent where $C$ is a positive constant independent of $h > 0$.

\end{lemma}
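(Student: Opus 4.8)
The plan is to estimate the difference of the two cost functionals directly from their definitions. Since $J_{h\alpha}(g) = \tfrac12\|u_{h\alpha g}\|_H^2 + \tfrac{M}{2}\|g\|_H^2$ and $J_\alpha(g) = \tfrac12\|u_{\alpha g}\|_H^2 + \tfrac{M}{2}\|g\|_H^2$, the regularization terms $\tfrac{M}{2}\|g\|_H^2$ cancel exactly, so that
\[
|J_{h\alpha}(g) - J_\alpha(g)| = \tfrac12\bigl|\,\|u_{h\alpha g}\|_H^2 - \|u_{\alpha g}\|_H^2\,\bigr|.
\]
First I would factor the difference of squares as $\|u_{h\alpha g}\|_H^2 - \|u_{\alpha g}\|_H^2 = \bigl(\|u_{h\alpha g}\|_H + \|u_{\alpha g}\|_H\bigr)\bigl(\|u_{h\alpha g}\|_H - \|u_{\alpha g}\|_H\bigr)$ and then bound the second factor by $\|u_{h\alpha g} - u_{\alpha g}\|_H$ via the triangle inequality.

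Next I would control the two factors separately. For the first factor, I need a uniform (in $h$) bound on $\|u_{h\alpha g}\|_H$ and $\|u_{\alpha g}\|_H$; this follows by taking $v_h = 0 \in K_{+h}$ in the discrete variational inequality (\ref{ec 2.3}) (respectively $v = 0$ in (\ref{S alfa})), using the coerciveness (\ref{ec 1.5}) and the continuity of the linear functionals on the right-hand side together with the continuous embedding $V \hookrightarrow H$ and the trace theorem, to get $\|u_{h\alpha g}\|_V \le C(\alpha)(\|g\|_H + \|q\|_Q + \|b\|_R)$, hence $\|u_{h\alpha g}\|_H \le C_1(\alpha)$ and likewise for $u_{\alpha g}$. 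For the second factor, I would invoke Lemma \ref{L2.2}, which gives $\|u_{h\alpha g} - u_{\alpha g}\|_V \le C(\alpha) h^{(r-1)/2}$, and then pass to the $H$-norm using $\|\cdot\|_H \le c\,\|\cdot\|_V$. Combining, $|J_{h\alpha}(g) - J_\alpha(g)| \le \tfrac12 \cdot 2C_1(\alpha) \cdot c\,C(\alpha) h^{(r-1)/2} = C(\alpha) h^{(r-1)/2}$, which is the claimed estimate; note the constant depends on $\alpha$ and on the data $q$, $b$, but not on $h$.

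The only mildly delicate point is making sure the bound is genuinely uniform in $h$: the constant in Lemma \ref{L2.2} is already independent of $h$, and the a priori bound on $\|u_{h\alpha g}\|_H$ must be obtained with a constant independent of $h$ — but this is automatic since the coercivity constant $\lambda_\alpha$ and the norms of $g$, $q$, $b$ do not involve $h$. I do not expect any substantive obstacle here; the argument is essentially a difference-of-squares manipulation feeding off the already-established $V$-norm convergence rate of Lemma \ref{L2.2}.
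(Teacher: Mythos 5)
Your proposal is correct and follows essentially the same route as the paper: both reduce $|J_{h\alpha}(g)-J_\alpha(g)|$ to a bounded factor times $\lVert u_{h\alpha g}-u_{\alpha g}\rVert_H$ and then invoke the $O(h^{(r-1)/2})$ estimate of Lemma \ref{L2.2}. The only (cosmetic) difference is that you factor the difference of squares and use the reverse triangle inequality plus an explicit a priori bound on $\lVert u_{h\alpha g}\rVert_H$, whereas the paper expands $\lVert u_{h\alpha g}\rVert_H^2-\lVert u_{\alpha g}\rVert_H^2=\lVert u_{h\alpha g}-u_{\alpha g}\rVert_H^2+2(u_{\alpha g},u_{h\alpha g}-u_{\alpha g})_H$; both yield the same constant structure.
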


\begin{proof} By definition of the discrete cost functional $J_{h \alpha}$, we have:

\begin{equation*}
   J_{h \alpha}(g)-J_{\alpha}(g) = \frac{1}{2}\,(\rVert u_{h \alpha g}\lVert^2_H- \rVert u_{\alpha g}\lVert^2_H)= \frac{1}{2} \rVert u_{h \alpha g}-u_{\alpha g}\lVert^2_H \, + \, (u_{\alpha g}, u_{h \alpha g} - u_{\alpha g})_H
  \end{equation*}

\noindent and therefore, if we apply (\ref{ec 2.6}), it results:

 \begin{equation*}
   \lvert J_{h \alpha}(g)-J{\alpha}(g) \rvert \leq \, (\frac{1}{2}\rVert u_{h \alpha g}-u_{ \alpha g}\lVert_H \,+\, \rVert u_{\alpha g}\lVert_H) \, \rVert u_{h \alpha g}-u_{\alpha g}\lVert_H \leq C(\alpha) \, h^{\frac{r-1}{2}},
 \end{equation*}

\noindent and (\ref{ec 2.9}) holds.

\end{proof}

\section{Results of Convergence} 

\subsection{Convergence when $h \rightarrow 0$}

\begin{theorem} \label{Teo 3.1}

 Let $ u_{\alpha g} \in K_+ \bigcap H^r(\Omega), (1< r \leq 2)$ and $u_{h \alpha g} \in K_{+h}$ be the solutions of the elliptic variational
 inequalities (\ref{S alfa}) and (\ref{ec 2.3}) respectively for the control $g \in H$,
 then $u_{h \alpha g} \rightarrow u_{\alpha g}$ in $V$ when $h \rightarrow 0^+$.
\end{theorem}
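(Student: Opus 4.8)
The plan is to show convergence in two stages: first extract a limit along subsequences using the a priori bound, then identify that limit as $u_{\alpha g}$ and upgrade weak convergence to strong convergence in $V$. A cleaner route, however, is available directly from Lemma~\ref{L2.2}, so I would organize the proof to use both the direct estimate and a self-contained variational argument.

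First I would note that Lemma~\ref{L2.2} already gives the estimate $\Arrowvert u_{h\alpha g}-u_{\alpha g}\Arrowvert_V \leq C(\alpha)\,h^{(r-1)/2}$ under the regularity hypothesis $u_{\alpha g}\in H^r(\Omega)$, and since $r>1$ the right-hand side tends to $0$ as $h\to 0^+$. This immediately yields the claim. To make the section self-contained and to prepare the techniques needed later for the $\alpha\to\infty$ and double-limit results, I would also sketch the more robust compactness argument that does not quantify the rate. Namely: take $v_h=\Pi_h(u_{\alpha g})$ in~(\ref{ec 2.3}); using coercivity~(\ref{ec 1.5}) of $a_\alpha$ together with the continuity of $a_\alpha$ and of the linear forms, and the interpolation bound~(\ref{ec 2.2}) (which forces $\Pi_h(u_{\alpha g})\to u_{\alpha g}$ in $V$), derive a uniform bound $\Arrowvert u_{h\alpha g}\Arrowvert_V\le C$ independent of $h$.

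Next, by reflexivity of $V$ there is a subsequence with $u_{h\alpha g}\rightharpoonup \eta$ in $V$ weak, and by compact embedding $V\hookrightarrow H$ the convergence is strong in $H$ (and in the trace space $R$). To identify $\eta$, I would first check $\eta\in K_+$: the set $K_+$ is convex and closed, hence weakly closed, and since each $u_{h\alpha g}\in K_{+h}\subset K_+$ the limit stays in $K_+$. Then for a fixed $v\in K_+\cap C^0(\overline\Omega)$ (a dense class in $K_+$) take $v_h=\Pi_h(v)\in K_{+h}$ in~(\ref{ec 2.3}); passing to the limit using weak lower semicontinuity of $a_\alpha(\cdot,\cdot)$ on the diagonal (i.e. $\liminf a_\alpha(u_{h\alpha g},u_{h\alpha g})\ge a_\alpha(\eta,\eta)$, valid since $a_\alpha$ is a nonnegative symmetric bilinear form), the strong convergence $\Pi_h(v)\to v$ in $V$, and the strong $H$- and $R$-convergence of $u_{h\alpha g}$, I would obtain that $\eta$ satisfies~(\ref{S alfa}); by Theorem analogous to uniqueness for~(\ref{S alfa}) (Lax--Milgram / Stampacchia), $\eta=u_{\alpha g}$, and the whole family converges, not just a subsequence.

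Finally I would upgrade to strong convergence in $V$. Write $\lambda_\alpha\Arrowvert u_{h\alpha g}-u_{\alpha g}\Arrowvert_V^2\le a_\alpha(u_{h\alpha g}-u_{\alpha g},\,u_{h\alpha g}-u_{\alpha g})$ and expand; using~(\ref{ec 2.3}) with $v_h=\Pi_h(u_{\alpha g})$ to bound $a_\alpha(u_{h\alpha g},u_{h\alpha g})$ from above by terms involving $a_\alpha(u_{h\alpha g},\Pi_h(u_{\alpha g}))$ and the linear forms, together with the already-established weak convergence in $V$, strong convergence in $H$ and $R$, and $\Pi_h(u_{\alpha g})\to u_{\alpha g}$ in $V$, every cross term converges to $a_\alpha(u_{\alpha g},u_{\alpha g})$ and the bound collapses to $0$. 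The main obstacle is the variational inequality structure: unlike the equality case one cannot simply test with the difference $u_{h\alpha g}-u_{\alpha g}$, so the admissible test functions must be chosen as $\Pi_h(u_{\alpha g})$ and $\Pi_h(v)$ and one must carefully track that $K_{+h}\subset K_+$ makes $u_{h\alpha g}$ itself admissible in~(\ref{S alfa}); handling the density of $K_+\cap C^0(\overline\Omega)$ in $K_+$ and the lower semicontinuity of the quadratic form is where the care is needed. Given Lemma~\ref{L2.2}, though, the short proof is simply to invoke~(\ref{ec 2.6}) and let $h\to 0^+$.
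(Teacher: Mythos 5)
Your proposal is correct and follows essentially the same route as the paper: the paper likewise first establishes the uniform bound, extracts a weak limit identified via the discrete inequality with $v_h=\Pi_h(v)$ and uniqueness, and then obtains the strong convergence in $V$ exactly as you suggest, by invoking the estimate~(\ref{ec 2.6}) of Lemma~\ref{L2.2} and letting $h\to 0^+$. Your observation that~(\ref{ec 2.6}) alone already yields the theorem is accurate, and your extra energy-type argument is consistent with the techniques the paper uses later (e.g.\ in Theorem~\ref{Teo 3.2}).
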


\begin{proof}

 Similarly to the part $a)$ of the Lemma 2.1, we can show
 that there exist a constant $C > 0$ such that $ \Arrowvert u_{h
 \alpha g} \Arrowvert_V \leq C, \hspace{0.4cm} \forall \,\,h>0.$
 Therefore, we conclude that there exists $\eta_\alpha \in V$ so that $u_{h
 \alpha g} \rightharpoonup \eta_\alpha$  in $V$ (in $H$ strong) as $h
 \rightarrow 0^+$ and $\eta \in K_+$. On the other hand, given $v \in
 K_+$ let be $v_h=\Pi (v) \in K_{+h}$ for each $h$ such that $v_h\rightarrow v$ in $V$ when
  $h$ goes to zero. Now, by considering $v_h \in K_{+h}$ in the discrete elliptic variational inequality (\ref{ec 2.3}) we get:

 \begin{equation} \label{ec 3.1}
    a_\alpha(u_{h \alpha g},v_h- u_{h \alpha g}) \geq (g, v_h-u_{h \alpha g})_H- (q,v_h-u_{h \alpha g})_Q + \alpha (b,v_h-u_{h \alpha g})_R
  \end{equation}

 \noindent and when we pass to the limit as $h\rightarrow 0^+$ in (\ref{ec 3.1}) by using that the bilinear form $a$ is lower
 weak semi-continuous in $V$, we obtain:

 \[a_\alpha(\eta_\alpha, v - \eta_\alpha) \geq (g, v - \eta_\alpha)_H - (q, v-\eta_\alpha)_Q + \alpha (b,v-\eta_\alpha)_R, \hspace{.4cm} \forall \,v\in K_+\]

 \noindent and from the uniqueness of the solution of the discrete elliptic variational inequality (\ref{S alfa}), we obtain that $ \eta=u_{\alpha g}.$

 \noindent Now, we will prove the strong convergence. As consequence of $Lemma\,\, 2.2$, by passing to the limit when $h\rightarrow 0^+$ in the inequality (\ref{ec 2.6}), it results:
\[\lim_{h\rightarrow 0^+} \Arrowvert u_{h \alpha g}-u_{\alpha g}\Arrowvert_V =0.\]
\end{proof}

Henceforth we will consider the following:

\noindent \textbf{Definition} Given $\mu \in [0,1]$ and $g_1, g_2 \in H$, we define:

  \begin{itemize}
   \item [a)] the convex combinations of two data $g_1$ and $g_2$ as
              \begin{equation} \label{ec 3.2}
                g_3(\mu) = \mu \, g_1 + (1-\mu) g_2\,\,\in H,
          \end{equation}

   \item [b)] the convex combination of two discrete solutions

        \begin{equation}  \label{ec 3.3}
                u_{h\alpha 3}(\mu)= \mu \, u_{h \alpha g_1} + (1- \mu) u_{h \alpha g_2} \,\,\,\in K_{+h}
        \end{equation}

   \item [c)] and $u_{h\alpha 4}(\mu)$ as the associated discrete state system which is the solution of the discrete elliptic variational
          inequality (\ref{ec 2.3}) for the control $g_3(\mu)$.

  \end{itemize}

\noindent Following the idea given in \cite{BOUKROUCHE12,OLGUIN} we define two open problems. Given the controls $g_1, g_2 \in H$,

\begin{itemize}
 \item [a)]  \begin{equation} \label{ec 3.4}
       0 \leq u_{h \alpha 4} (\mu) \leq u_{h \alpha 3}( \mu) \hspace{0.1cm} in \hspace{0.1cm} \Omega, \hspace{.5cm} \forall \,\, \mu \in [0, 1], \,\forall h>0,
    \end{equation}

 \item [b)] \begin{equation} \label{ec 3.5}
       \rVert u_{h \alpha 4} (\mu) \lVert_H\, \leq \, \rVert u_{h \alpha 3}( \mu)\lVert_H  \hspace{.5cm} \forall \,\, \mu \in [0, 1], \,\forall h>0.
    \end{equation}
\end{itemize}

\noindent\textbf{Remark 1}: We have that $(\ref{ec 3.4})  \Rightarrow  (\ref{ec 3.5})$.

\noindent\textbf{Remark 2}: If (\ref{ec 3.4}) (or (\ref{ec 3.5})) is true, then the functional $J_{h \alpha}$ is H-elliptic and a strictly convex
application because we have:

\noindent $i)$ \[\rVert g_{3 \mu} \lVert^2_H = \mu \rVert g_1 \lVert^2_H + (1-\mu) \rVert g_2 \lVert^2_H - \mu (1-\mu) \rVert g_2- g_1 \lVert^2_H  \,\,\,\forall g_1, g_2 \in H, \forall \mu \in [0,1]\]

\noindent $ii)$ \[\rVert u_{h \alpha 3}(\mu) \lVert^2_H = \mu \rVert u_{h \alpha 1} \lVert^2_H + (1-\mu) \rVert u_{h \alpha 2} \lVert^2_H - \mu (1-\mu) \rVert u_{h \alpha 2}-u_{h \alpha 1} \lVert^2_H\]
$\forall g_1, g_2 \in H, \forall \mu \in [0,1],\,\, \forall \alpha > 0$.

\noindent Then we get:

  $$\mu J_{h \alpha}(g_1) + (1-\mu) J_{h \alpha}(g_2)-J_{h \alpha}(g_3(\mu))$$

 $$= \frac{\mu(1-\mu)}{2}\lVert u_{h  \alpha g_2}-u_{h  \alpha g_1} \rVert^2_H + \frac{M}{2} \, \mu (1-\mu)\, \lVert g_2-g_1 \rVert^2_H + \frac{1}{2} \big[\lVert u_{h  \alpha 3} \rVert^2_H- \lVert u_{h  \alpha 4} \rVert^2_H \big]$$
 $$\geq \frac{\mu(1-\mu)}{2}\lVert u_{h  \alpha g_2}-u_{h  \alpha g_1} \rVert^2_H + \frac{M}{2} \, \mu (1-\mu)\, \lVert g_2-g_1 \rVert^2_H \geq $$
 $$\frac{M}{2} \, \mu (1-\mu)\, \lVert g_2-g_1 \rVert^2_H > 0 \hspace{0.2cm} \forall \mu \in (0,1),\,\, g_1 \neq g_2 \in H$$

\noindent and therefore, the uniqueness for the discrete optimal control problem $(P_{h \alpha})$, defined in (\ref{ec 2.8}), holds.

\begin{theorem} \label{Teo 3.2}

 Let $u_{\alpha g_{op}} \in K_+$ be the continuous state system associated to the optimal control $g_{op_{\alpha}} \in H$
 which is the solution of the continuous distributed optimal control problem (\ref{Def Palfa}).
 If, for each $h>0$, we choose an discrete optimal control $g_{op_{h \alpha}} \in H$  which is a solution of the discrete distributed optimal control problem (\ref{ec 2.8})
 and its corresponding discrete state system $u_{h \alpha \,g_{op_{h \alpha}}} \in K_{+h}$, we obtain that:

\begin{equation} \label{ec 3.6}
  u_{h \alpha \,g_{op_{h \alpha}}} \rightarrow u_{\alpha g_{op_\alpha}} \,\,\, in \,\,\, V \,\,\, strong \,\,\, when \,\,\, h\rightarrow 0^+,
\end{equation}

\noindent and

\begin{equation} \label{ec 3.7}
  g_{op_{h \alpha}} \rightarrow g_{op_\alpha}  \,\,\, in \,\,\,  H \,\,\, strong \,\,\, when \,\,\, h\rightarrow 0^+.
\end{equation}

\end{theorem}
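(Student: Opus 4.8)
The standard strategy for this kind of convergence-of-optimal-controls result is to first obtain a uniform (in $h$) bound on the discrete optimal controls, extract a weakly convergent subsequence, and then identify the weak limit as $g_{op_\alpha}$ by using the optimality inequalities together with the state-convergence results already established. First I would test the optimality of $g_{op_{h\alpha}}$ against a fixed competitor, say the continuous optimal control $g_{op_\alpha}$ itself: $J_{h\alpha}(g_{op_{h\alpha}})\le J_{h\alpha}(g_{op_\alpha})$. The right-hand side is bounded uniformly in $h$ because, by Lemma \ref{L 2.3}, $J_{h\alpha}(g_{op_\alpha})\to J_\alpha(g_{op_\alpha})$ as $h\to 0^+$, so it stays bounded for $h$ small. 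Combining this with part b) of Theorem \ref{Teo 2.2}, namely $J_{h\alpha}(g)\ge \frac{M}{2}\|g\|_H^2 - C\|g\|_H$, gives $\frac{M}{2}\|g_{op_{h\alpha}}\|_H^2 - C\|g_{op_{h\alpha}}\|_H \le$ const, hence $\|g_{op_{h\alpha}}\|_H \le C$ uniformly in $h$.

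Next, by weak compactness of bounded sets in the Hilbert space $H$, there is a subsequence (still denoted $g_{op_{h\alpha}}$) with $g_{op_{h\alpha}} \rightharpoonup f$ in $H$-weak for some $f \in H$. By part ii) of Lemma \ref{L2.1} (the continuity of the discrete solution map under weak convergence of the data), and more precisely by combining it with the $h\to 0$ state convergence of Theorem \ref{Teo 3.1}, I would argue that $u_{h\alpha g_{op_{h\alpha}}} \to u_{\alpha f}$ in $V$. The cleanest way: write $\|u_{h\alpha g_{op_{h\alpha}}} - u_{\alpha f}\|_V \le \|u_{h\alpha g_{op_{h\alpha}}} - u_{h\alpha f}\|_V + \|u_{h\alpha f} - u_{\alpha f}\|_V$; the second term tends to $0$ by Theorem \ref{Teo 3.1}, and for the first I would need a version of Lemma \ref{L2.1}ii) that is uniform in $h$ (this is the point that requires care — see below). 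Then, using the lower weak semicontinuity machinery from Theorem \ref{Teo 2.2}c) applied in the discrete setting, together with $J_{h\alpha}(g_{op_{h\alpha}})\le J_{h\alpha}(g)$ for arbitrary fixed $g\in H$ and passing to the limit (using $J_{h\alpha}(g)\to J_\alpha(g)$ from Lemma \ref{L 2.3} on the right, and $\liminf$ on the left via $u_{h\alpha g_{op_{h\alpha}}}\to u_{\alpha f}$ strongly in $H$ and $\|f\|_H\le\liminf\|g_{op_{h\alpha}}\|_H$), I get $J_\alpha(f)\le J_\alpha(g)$ for all $g\in H$. By uniqueness of the continuous optimal control (which holds under the hypothesis that (\ref{ec 3.4}) or (\ref{ec 3.5}) is valid, cf. Remark 2 in the continuous case in \cite{BOUKROUCHE12}), $f = g_{op_\alpha}$, and since the limit is independent of the subsequence, the whole family converges: $g_{op_{h\alpha}}\rightharpoonup g_{op_\alpha}$ weakly in $H$ and $u_{h\alpha g_{op_{h\alpha}}}\to u_{\alpha g_{op_\alpha}}$ in $V$, giving (\ref{ec 3.6}).

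Finally, to upgrade the weak convergence of the controls in (\ref{ec 3.7}) to strong convergence, I would run the standard $\liminf/\limsup$ argument on the cost functionals. On one hand $\liminf_{h\to 0}\|g_{op_{h\alpha}}\|_H \ge \|g_{op_\alpha}\|_H$ by weak lower semicontinuity of the norm. On the other hand, from $J_{h\alpha}(g_{op_{h\alpha}})\le J_{h\alpha}(g_{op_\alpha})$, Lemma \ref{L 2.3}, and the strong $H$-convergence $u_{h\alpha g_{op_{h\alpha}}}\to u_{\alpha g_{op_\alpha}}$ already obtained, one extracts
$$\limsup_{h\to 0}\frac{M}{2}\|g_{op_{h\alpha}}\|_H^2 \le \lim_{h\to 0}\Big(J_{h\alpha}(g_{op_\alpha}) - \tfrac12\|u_{h\alpha g_{op_{h\alpha}}}\|_H^2\Big) = J_\alpha(g_{op_\alpha}) - \tfrac12\|u_{\alpha g_{op_\alpha}}\|_H^2 = \frac{M}{2}\|g_{op_\alpha}\|_H^2,$$
so $\|g_{op_{h\alpha}}\|_H \to \|g_{op_\alpha}\|_H$; combined with weak convergence in the Hilbert space $H$ this yields strong convergence, which is (\ref{ec 3.7}).

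The main obstacle I anticipate is the uniformity in $h$ of the state-map continuity used in the identification step: Lemma \ref{L2.1}ii) gives, for each fixed $h$, that $u_{h\alpha g_n}\to u_{h\alpha g}$ when $g_n\rightharpoonup g$, but here both the data $g_{op_{h\alpha}}$ and the mesh parameter $h$ vary simultaneously. One has to be careful to combine the $h$-uniform a priori bound (\ref{ec 2.4}), the error estimate (\ref{ec 2.6}) of Lemma \ref{L2.2}, and a diagonal-type argument so that passing to the limit in the discrete variational inequality (\ref{ec 2.3}) — testing with $v_h = \Pi_h(v)\to v$ for $v\in K_+$ and using the weak lower semicontinuity of $a$ — correctly produces the continuous variational inequality (\ref{S alfa}) for the limit $u_{\alpha f}$; the compact embedding $V\hookrightarrow H$ is what makes the products $(g_{op_{h\alpha}}, v_h - u_{h\alpha g_{op_{h\alpha}}})_H$ and the quadratic term in the cost pass to the limit correctly despite only weak convergence of the controls.
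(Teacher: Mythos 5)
Your skeleton is essentially the paper's: uniform bounds on $g_{op_{h\alpha}}$ and on the discrete states obtained by testing optimality against a fixed competitor (the paper simply takes $g=0$, so neither Lemma \ref{L 2.3} nor any regularity of $u_{\alpha g_{op_\alpha}}$ is needed at that stage, and it gets the uniform $V$-bound by taking $v_h=b$ in (\ref{ec 2.3})); extraction of weak limits $f$ and $\eta_\alpha$; identification of $\eta_\alpha=u_{\alpha f}$ by passing to the limit directly in the discrete variational inequality with $v_h=\Pi_h(v)$, $v\in K_+$ (this is exactly your "diagonal" step — no uniform-in-$h$ version of Lemma \ref{L2.1} ii) is ever needed, only the uniform $V$-bound, the compact embedding $V\hookrightarrow H$ and weak lower semicontinuity of $a_\alpha$); and identification of $f=g_{op_\alpha}$ via weak lower semicontinuity of the cost together with Theorem \ref{Teo 3.1}. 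The genuine divergence is in how the strong convergences are closed: the paper proves (\ref{ec 3.6}) first, by cross-testing ($v=u_{h\alpha g_{op_{h\alpha}}}$ in (\ref{S alfa}) for $g_{op_\alpha}$, $v_h=\Pi_h(u_{\alpha g_{op_\alpha}})$ in (\ref{ec 2.3}) for $g_{op_{h\alpha}}$) and coerciveness of $a_\alpha$, and then waves off (\ref{ec 3.7}); you instead give the $\limsup$ argument on the cost for (\ref{ec 3.7}) in full (a useful addition, since the paper omits it) and want (\ref{ec 3.6}) from a triangle inequality. Your explicit remark that identifying $f$ with the particular $g_{op_\alpha}$ rests on uniqueness of the continuous optimal control is also more careful than the paper's silent identification.

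One step of yours is out of order, and as written leans on something unavailable: you claim $u_{h\alpha g_{op_{h\alpha}}}\to u_{\alpha g_{op_\alpha}}$ strongly in $V$ already at the end of the identification stage, via a "uniform in $h$ version of Lemma \ref{L2.1} ii)" which you yourself flag as missing (Lemma \ref{L2.1} ii) is for fixed $h$, and Lemma \ref{L2.1} B) helps only once the controls converge strongly). The repair stays inside your own toolbox: observe that both the identification step and your $\limsup$ argument for (\ref{ec 3.7}) use only the strong $H$-convergence of the states (from the compact embedding) and the weak convergence of the controls; so prove (\ref{ec 3.7}) first, and then (\ref{ec 3.6}) follows from $\Arrowvert u_{h\alpha g_{op_{h\alpha}}}-u_{\alpha g_{op_\alpha}}\Arrowvert_V\le \frac{1}{\lambda_\alpha}\Arrowvert g_{op_{h\alpha}}-g_{op_\alpha}\Arrowvert_H+\Arrowvert u_{h\alpha g_{op_\alpha}}-u_{\alpha g_{op_\alpha}}\Arrowvert_V$, the last term going to zero by Theorem \ref{Teo 3.1} (or the estimate (\ref{ec 2.6})). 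Alternatively, adopt the paper's cross-testing/coercivity argument, which yields (\ref{ec 3.6}) without knowing (\ref{ec 3.7}) beforehand. With that reordering your proposal is correct and, apart from the strong-convergence bookkeeping, follows the paper's route.
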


\begin{proof}
 Now, we consider a fixed value of the heat transfer coefficient $\alpha > 0$. Let be $h >0$ and $g_{op_{h \alpha}}$ a solution of
 (\ref{ec 2.8}) and $u_{h\alpha g_{op_{h \alpha}}} $ its associated
discrete optimal state system which is the solution of the problem defined in (\ref{ec 2.3}) for each $h >0$.
From (\ref{ec 2.7}) and (\ref{ec 2.8}), we have that for all $g \in H$

 \begin{equation*}
  J_{h \alpha}(g_{op_{h \alpha}})= \frac{1}{2} \lVert u_{h \alpha g_{op_{h \alpha}}} \rVert^2_H + \frac{M}{2} \lVert g_{op_{h \alpha}} \rVert^2_H\, \leq \, \frac{1}{2} \lVert u_{h \alpha g} \rVert^2_H + \frac{M}{2} \lVert g \rVert^2_H.
 \end{equation*}

\noindent Then, if we consider $g=0$  and $u_{h \alpha 0}$ his corresponding associated state system, it results that:

\begin{equation*}
  J_{h \alpha}(g_{op_{h \alpha}})= \frac{1}{2} \lVert u_{h \alpha g_{op_{h \alpha}}} \rVert^2_H + \frac{M}{2} \lVert g_{op_{h \alpha}} \rVert^2_H\, \leq \, \frac{1}{2} \lVert u_{h \alpha 0} \rVert^2_H.
 \end{equation*}

\noindent Since $\lVert u_{h \alpha 0} \rVert_H \leq C \hspace{0.4cm} \forall \hspace{0.2cm} h$, then we can obtain:

  \begin{equation} \label{ec 3.8}
        \lVert u_{h \alpha g_{op_{h \alpha}}} \rVert_H \leq C \hspace{0.4cm} \forall \hspace{0.2cm} h
     \end{equation}

\noindent and

     \begin{equation}  \label{ec 3.9}
        \lVert g_{op_{h \alpha}} \rVert_H \, \leq \, \frac{1}{\sqrt{M}} \, \lVert u_{h \alpha 0} \rVert_H \leq \frac{1}{\sqrt{M}}\, C  \hspace{0.4cm} \forall \hspace{0.2cm} h.
     \end{equation}

\vspace{.2cm}

\noindent If we consider $v_h=b \in K_{+h} $ in the inequality (\ref{ec 2.3}) for $g_{op_{h \alpha}}$ we obtain, because the coerciveness of the application $a_\alpha$:

\begin{equation} \label{ec 3.10}
  \lVert u_{h \alpha g_{op_{h \alpha}}} \rVert_V \leq C
\end{equation}

\noindent where the constant $C$ is independent of the parameter $h$ y $\alpha > 0$. Now we can say that there exist $\eta_\alpha \,\, \in \,\,V$ and $f_\alpha \,\, \in \,\,H$
such that $u_{h \alpha g_{op_{h \alpha}}}\rightharpoonup \eta_\alpha$ in $V$ weak
(in $H$ strong), and $g_{op_{h \alpha}}\rightharpoonup f_\alpha$ in $H$  weak when $h\rightarrow 0^+$. Moreover, $\eta_\alpha \in K_+$.

Let given $v \in K_+$, by taking $v_h=\Pi(v) \in K_{+h}$ we know that
$v_h\rightarrow v$ in $V$ strong when $h\rightarrow 0^+$. Then, if
we consider the variational elliptic inequality (\ref{ec 2.3}) for
$g= g_{op_{h \alpha}}$ we have, taking into account that the
application $a_\alpha$ is a lower weak semi-continuous application
in $V$, that:

\begin{equation*}
 a_\alpha (\eta_\alpha, v-\eta_\alpha) \geq (f_\alpha, v-\eta_\alpha) - (q, v-\eta_\alpha)_Q + \alpha (b,v-\eta_\alpha)_R, \hspace{1cm} \forall \,\,v \in K_+
\end{equation*}

\noindent and by the uniqueness of the solution of the problem given by the elliptic variational inequality
(\ref{S alfa}), we deduce that $\eta_\alpha = u_{\alpha \,f_\alpha}.$

\noindent By using that the functional cost $J_\alpha$ is
semi-continuous in $H$ weak (see \cite{BOUKROUCHE12}) and Theorem
$3.1$, it results that $f= u_{\alpha {g_{op_\alpha}}}$ and $\eta_\alpha =
u_{g_{op_\alpha}}$.

\noindent Now, we consider $v=u_{h \alpha g_{op_{h \alpha}}} \in K_{+h}\subset K_+$ in the system $(S_\alpha)$ with control
$g_{op_\alpha}$, and $v_h= \Pi_h(u_{\alpha g_{op_\alpha}})$ in the discrete system $(S_{h\alpha})$ for the control
$g_{op_{h\alpha}}$ and define $w_h= u_{h \alpha g_{op_{h \alpha}}}-u_{\alpha g_{op_\alpha}}$.
\noindent After some mathematical work, we obtain that:
\[a_\alpha(w_h,w_h) \leq -a_\alpha(u_{h \alpha g_{op_{h \alpha}}}, \Pi_h(u_{\alpha g_{op_\alpha}})-u_{\alpha {g_{op \alpha}}}) + (q,\Pi_h(u_{\alpha g_{op_\alpha}})-u_{\alpha{g_{op \alpha}}})_Q\]

\[-\alpha (b, \Pi_h(u_{\alpha g_{op_\alpha}})-u_{\alpha{g_{op \alpha}}})_R + (g_{op_{h \alpha}}, \Pi_h(u_{\alpha g_{op_\alpha}})- u_{h \alpha g_{op_{h \alpha}}})_H \]
\[- (g_{op_\alpha},w_h)_H.\]

\noindent From the coerciveness of the application $a_\alpha$, and $ u_{h \alpha g_{op_{h \alpha}}} \rightarrow u_{\alpha g_{op_\alpha}} \,in \,H$  and
$\Pi_h(u_{\alpha g_{op_\alpha}}) \rightarrow u_{\alpha{g_{op \alpha}}} \,in \,H$, we obtain that $\lVert w_h \rVert_V \rightarrow 0$ if
$h \rightarrow 0$ and then (\ref{ec 3.6}) it holds. Its easy to see that (\ref{ec 3.7}) holds too.

\end{proof}

\subsection{Convergence when $\alpha \rightarrow \infty$}

Now, under the same hypothesis in \S 1, we consider the following free boundary system \cite{BOUKROUCHE12}:

\begin{equation}  \label{ec 3.11}
  \,\,\,\, u\geq 0;\,\, u(-\Delta u-g)=0; \,\, -\Delta u-g \geq 0 \,\, in\,\, \Omega;
\end{equation}

\begin{equation}  \label{ec 3.12}
   u=b \,\,\, on\,\, \Gamma_1; \,\,\, -\frac{\partial u}{\partial n} =q \,\,\, on \,\,\,  \Gamma_2;
\end{equation}

\noindent where the function $g$ in (\ref{ec 3.11}) can be considered as the internal energy in $\Omega$, $b$
is the positive constant temperature on $\Gamma_1$ and $q$ is the heat flux on $\Gamma_2$. The variational formulation of
the above problem is given as $(S)$: Find  $ u_g \in  K$ such that

\begin{equation} \label{ec 3.13}
   a(u,v-u_g)\geq (g, v-u_g)_H - (q,v-u_g)_Q,\,\,\, \forall \, v \in K
\end{equation}

\noindent where

\begin{equation*}
 K=\{v \in V: \, v\geq 0 \,\,in\,\, \Omega,\,v{/ \Gamma_1} = b\}.
\end{equation*}

\noindent In \cite{BOUKROUCHE12}, the following continuous distributed optimal control problem $(P)$ associated
with the elliptic variational inequality (\ref{ec 3.13}) was considered: Find the
continuous distributed optimal control $g_{op} \, \in H$ such that

\begin{equation}  \label{ec 3.14}
  J(g_{op})=\min_{g \in H} J(g)
\end{equation}

\noindent where the quadratic cost functional $J:H \rightarrow \mathbb{R}^+_0$ is defined by:

\begin{equation} \label{ec 3.15}
   J(g)= \frac{1}{2} \Arrowvert u_g \Arrowvert^2_H + \frac{M}{2} \Arrowvert g \Arrowvert^2_H
\end{equation}

\noindent with $M > 0$ a given constant and $u_g$ is the corresponding solution of the elliptic
variational inequality (\ref{ec 3.13}) associated to the control $g \in H$.

\noindent Therefore, as in \S 2, we define the discrete variational inequality formulation $(S_h)$ of the
system $(S)$ as follows: Find $u_{hg} \in K_h$ such that

\vspace{-.3cm}

\begin{equation}\label{ec 3.16}
  a(u_{hg},v_h-u_{hg})\geq (g, v_h-u_{hg})_H - (q, v_h-u_{hg})_Q \,\,\, \forall v_h \in K_h.
\end{equation}

\noindent where \[ K_h= \{ v_h \in V_h: v_h \geq 0 \,\, in \,\, \Omega, v_h / \Gamma_1 = b\}\]

The corresponding discrete distributed optimal control problem ($P_h$) of the continuous distributed optimal control problem ($P$)
is defined as: Find the discrete distributed optimal control $g_{op_h} \, \in H$ such that

\begin{equation}  \label{ec 3.17}
 J_h(g_{op_h})=\min_{g \in H} J_h(g)= \frac{1}{2} \Arrowvert u_{h g} \Arrowvert^2_H + \frac{M}{2} \Arrowvert g \Arrowvert^2_H,
\end{equation}

\noindent where $u_{h g}$ is the solution of the elliptic variational inequality (\ref{ec 3.16}).

\begin{theorem} \label{Teo 3.3}

\vspace{.1cm}

\noindent $i)$ Let $g\in H$, and $q \in Q $ be, then there exists unique solution of elliptic variational inequality (\ref{ec 3.16}).

\vspace{.2cm}
\noindent $ii)$ There exists a solution of the discrete optimal control problem (\ref{ec 3.17})

\end{theorem}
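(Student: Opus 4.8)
The plan is to treat the two parts of Theorem~\ref{Teo 3.3} essentially as the $\alpha\to\infty$-limit analogues of Theorem~\ref{Teo 2.1} and Theorem~\ref{Teo 2.2}(d), since the structure is identical once one checks the hypotheses. For part $i)$, I would apply the Lax--Milgram / Stampacchia theorem for variational inequalities on the closed convex set $K_h\subset V_h$. The bilinear form here is $a(\cdot,\cdot)$ itself (no $\alpha$ term), so I must first observe that $a$ is continuous, symmetric and \emph{coercive on $V$} --- this is exactly the statement recalled after \eqref{ec 1.4} that $a_1$, hence (by \eqref{ec 1.5} with $\alpha=1$) $a$, satisfies $a(v,v)\ge\lambda_1\|v\|_V^2$; the one subtlety worth a line is that $K_h$ is nonempty (it contains, e.g., $\Pi_h(\tilde b)$ for any fixed smooth extension $\tilde b$ of the boundary datum $b$, or simply the function $b$ itself viewed in $V_h$ if $b$ is constant), and that $K_h$ is closed and convex in the finite-dimensional space $V_h$. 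Then the linear functional $v_h\mapsto (g,v_h)_H-(q,v_h)_Q$ is continuous on $V_h$, and the classical existence-uniqueness theorem for elliptic variational inequalities gives a unique $u_{hg}\in K_h$ solving \eqref{ec 3.16}.

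For part $ii)$, I would mimic the proof of Theorem~\ref{Teo 2.2}(d): show $J_h$ is coercive and weakly lower semicontinuous on $H$, then invoke the direct method (the reference \cite{LIONS}). Coercivity is immediate from $J_h(g)\ge\frac{M}{2}\|g\|_H^2$, so $\|g_n\|_H\to\infty$ forces $J_h(g_n)\to\infty$; hence a minimizing sequence $\{g_n\}$ is bounded in $H$ and, $H$ being a Hilbert space, has a subsequence $g_n\rightharpoonup g_{op_h}$ weakly. For weak lower semicontinuity I need the map $g\mapsto u_{hg}$ to be weakly--strongly (or at least weakly--weakly) continuous from $H$ to $H$; this is the analogue of Lemma~\ref{L2.1}A, and the same argument applies --- a bound of the form $\|u_{hg_n}\|_V\le C$ uniform in $n$ (obtained by testing \eqref{ec 3.16} with a fixed element of $K_h$ and using coercivity of $a$), extraction of a weak-$V$ limit which is strong in $H$ by compact embedding $V\hookrightarrow\hookrightarrow H$ for fixed $h$ (indeed $V_h$ is finite-dimensional), and identification of the limit as $u_{hg}$ via passage to the limit in the inequality. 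Combined with $\|g_{op_h}\|_H\le\liminf\|g_n\|_H$ (from the parallelogram-type identity used in Theorem~\ref{Teo 2.2}(c)), this yields $J_h(g_{op_h})\le\liminf J_h(g_n)=\min_{g\in H}J_h(g)$, so $g_{op_h}$ is a minimizer.

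The only real point requiring care --- the ``main obstacle'' --- is that the convex set $K_h$ is an \emph{affine} set (a translate of a cone by the boundary datum $b$) rather than a cone through the origin, so one must make sure $K_h\neq\emptyset$ and that the coercivity estimate one extracts from testing \eqref{ec 3.16} is genuinely uniform in the approximating control sequence; this is handled by writing $u_{hg}=b_h+w_h$ with $b_h\in K_h$ a fixed lift of the datum and $w_h$ in the fixed cone $\{v_h\in V_h: v_h\ge 0,\ v_h/\Gamma_1=0\}$, reducing everything to an estimate on $w_h$. Everything else is a routine transcription of the arguments already given for $(S_{h\alpha})$ and $(P_{h\alpha})$, with $a_\alpha$ replaced by $a$ and $K_{+h}$ by $K_h$; I would therefore state the proof briefly and refer to Theorems~\ref{Teo 2.1} and \ref{Teo 2.2} and to \cite{LIONS} for the details.
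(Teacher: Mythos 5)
Your overall route is the same as the paper's: part $i)$ is an application of the Lax--Milgram/Stampacchia theorem for variational inequalities on the nonempty closed convex set $K_h$ (nonempty since the constant function $b$ belongs to it), and part $ii)$ is the direct method exactly as in Theorem \ref{Teo 2.2}(d), using the discrete analogue of Lemma \ref{L2.1}; the paper itself only cites Lax--Milgram and \cite{OLGUIN} at this point. However, there is one genuine error in your justification of part $i)$: you assert that $a$ is coercive on $V$ and that this follows from \eqref{ec 1.5} with $\alpha=1$. That is false. Inequality \eqref{ec 1.5} with $\alpha=1$ gives coercivity of $a_1(v,v)=a(v,v)+\Vert v\Vert_R^2$, not of $a$, and $a$ alone cannot be coercive on $V=H^1(\Omega)$: for any nonzero constant $v$ one has $a(v,v)=0$ while $\Vert v\Vert_V>0$. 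As written, your appeal to Stampacchia's theorem with the form $a$ on the whole space therefore does not go through.

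The correct (and standard) repair is the one you gesture at in your last paragraph but attach to the wrong concern: because every $v_h\in K_h$ satisfies $v_h/\Gamma_1=b$, differences of elements of $K_h$ lie in the subspace $V_{0h}=\{v_h\in V_h:\ v_h/\Gamma_1=0\}$, on which $\Vert v\Vert_R=0$ and hence $a(v,v)=a_1(v,v)\ge\lambda_1\Vert v\Vert_V^2$ by \eqref{ec 1.5}; equivalently, write $u_{hg}=b+w_h$ with $w_h$ in the discrete convex cone with zero trace on $\Gamma_1$ (note $a(b,\cdot)=0$ since $b$ is constant) and apply Stampacchia's theorem there. This uses $\mathrm{meas}(\Gamma_1)>0$ in an essential way and is precisely what gives uniqueness ($a(u_1-u_2,u_1-u_2)\le 0$ with $u_1-u_2\in V_{0h}$ forces $u_1=u_2$) and the uniform a priori bound you need in part $ii)$ when testing \eqref{ec 3.16} with a fixed element of $K_h$. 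With that correction, the rest of your argument for $ii)$ (coercivity and weak lower semicontinuity of $J_h$, boundedness of a minimizing sequence, finite-dimensionality of $V_h$ to upgrade weak to strong convergence of the states, identification of the limit state) is sound and coincides in substance with the paper's citation of \cite{OLGUIN}.
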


\begin{proof}

 $i)$ It follows from the application of Lax-Milgram Theorem \cite{KINDERHLERERSTAMPACCHIA}, \cite{LIONS}.

 $ii)$ It follows from \cite{OLGUIN}.
\end{proof}

\begin{theorem} \label{Teo 3.4}
Let $g\in H$, $q \in Q $ and $h > 0$ be, then we have

\[lim_{\alpha \rightarrow \infty} \Arrowvert u_{h \alpha g}-u_{h g}\Arrowvert_V =0 .\]
\end{theorem}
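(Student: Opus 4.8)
The plan is to fix $h>0$ and work entirely inside the finite-dimensional space $V_h$, exploiting the fact that for $\alpha$ large the discrete state $u_{h\alpha g}$ is forced, in an energetic sense, to behave like an element of $K_h$ on $\Gamma_1$. First I would establish a uniform (in $\alpha$, for $\alpha\ge 1$) a priori bound $\|u_{h\alpha g}\|_V\le C$ together with a bound $\alpha\|u_{h\alpha g}-b\|_R^2\le C$. This comes from testing the discrete inequality (\ref{ec 2.3}) with $v_h=b\in K_{+h}$: the coercivity of $a$ on the gradient part, together with $a_\alpha(v,v)=a(v,v)+\alpha\|v\|_R^2$, gives after standard manipulation both $\|\nabla u_{h\alpha g}\|_H\le C$ and $\alpha\|u_{h\alpha g}-b\|_R^2\le C$; combined with a trace/Poincaré-type inequality valid on the fixed space $V_h$ one upgrades this to a full $V$-bound independent of $\alpha$.

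Next, since $V_h$ is finite-dimensional, from any sequence $\alpha_n\to\infty$ I extract a subsequence with $u_{h\alpha_n g}\to \xi_h$ strongly in $V_h$ (all norms being equivalent). The bound $\alpha_n\|u_{h\alpha_n g}-b\|_R^2\le C$ forces $\|u_{h\alpha_n g}-b\|_R\to 0$, hence $\xi_h=b$ on $\Gamma_1$; together with $u_{h\alpha_n g}\ge 0$ passing to the limit, we get $\xi_h\in K_h$. Then I pass to the limit in (\ref{ec 2.3}): for any fixed $v_h\in K_h$ one has $v_h=b$ on $\Gamma_1$, so $\alpha_n(b,v_h-u_{h\alpha_n g})_R=\alpha_n(b-u_{h\alpha_n g},\,b-u_{h\alpha_n g})_R + \alpha_n(b-u_{h\alpha_n g},\,v_h - b)_R$; the first term is $\ge 0$ and the second is $\alpha_n\|b-u_{h\alpha_n g}\|_R\cdot 0$-type and in fact vanishes because $v_h-b=0$ on $\Gamma_1$. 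Hence the troublesome $\alpha_n$-terms drop out in the limit and one obtains $a(\xi_h,v_h-\xi_h)\ge (g,v_h-\xi_h)_H-(q,v_h-\xi_h)_Q$ for all $v_h\in K_h$, i.e. $\xi_h=u_{hg}$ by the uniqueness in Theorem \ref{Teo 3.3}(i). A subsequence argument then gives $u_{h\alpha g}\to u_{hg}$ in $V$ as $\alpha\to\infty$.

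For the strong convergence (which the a priori compactness argument already essentially delivers in finite dimensions) I would, to be safe, also give the direct estimate: test (\ref{ec 2.3}) with $v_h=u_{hg}\in K_h\subset K_{+h}$ and test (\ref{ec 3.16}) with $v_h=u_{h\alpha g}$ — but here lies a subtlety, since $u_{h\alpha g}\notin K_h$ in general (it need not equal $b$ on $\Gamma_1$). So instead I would test (\ref{ec 3.16}) with a suitable correction, or more cleanly combine the two inequalities to bound $a(u_{h\alpha g}-u_{hg},u_{h\alpha g}-u_{hg})$ by terms involving $\|u_{h\alpha g}-b\|_R$ and $\alpha\|u_{h\alpha g}-b\|_R^2$, both of which tend to zero.

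The main obstacle is precisely the handling of the boundary penalty term $\alpha(b,\cdot)_R$ and the fact that $K_h$ (Dirichlet-type on $\Gamma_1$) and $K_{+h}$ (Robin/free on $\Gamma_1$) are different constraint sets, so the natural test functions are not always admissible in the other inequality. The key trick making everything go through is the identity, valid for $v_h\in K_h$,
\[
\alpha(b,v_h-u_{h\alpha g})_R=\alpha(b-u_{h\alpha g},b-u_{h\alpha g})_R\ \ge\ 0,
\]
using $v_h=b$ on $\Gamma_1$, which both furnishes the a priori bound on $\alpha\|u_{h\alpha g}-b\|_R^2$ and lets the penalty term be discarded (with the correct sign) when passing to the limit.
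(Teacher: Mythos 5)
Your plan is sound and would prove the theorem, but it takes a noticeably different route from the paper. The paper tests (\ref{ec 2.3}) with $v_h=u_{hg}\in K_h\subset K_{+h}$ and obtains the single estimate (\ref{ec 3.18}); coercivity of $a_1$ then gives simultaneously the uniform bound $\lVert u_{h\alpha g}\rVert_V\le C$ and the quantitative decay $\lVert u_{h\alpha g}-u_{hg}\rVert_R^2\le C/(\alpha-1)$, after which a weakly convergent subsequence is extracted and the strong $V$-convergence is concluded ``similarly to Theorem \ref{Teo 3.1}''. You instead test with $v_h=b\in K_{+h}$ to get $\lVert u_{h\alpha g}\rVert_V\le C$ and $\alpha\lVert u_{h\alpha g}-b\rVert_R^2\le C$, use the finite dimensionality of $V_h$ (fixed $h$) to obtain strongly convergent subsequences, identify the limit as the solution of (\ref{ec 3.16}) by passing to the limit over test functions in $K_h$ and invoking uniqueness from Theorem \ref{Teo 3.3}. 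Your compactness argument makes the strong convergence immediate, which is cleaner than the paper's appeal to Theorem \ref{Teo 3.1}; on the other hand, the paper's direct comparison with $u_{hg}$ produces constants independent of $h$, which is what is reused for the double limit $(h,\alpha)\to(0^+,\infty)$, whereas your argument is intrinsically tied to fixed $h$ -- which is all the present theorem requires.

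One bookkeeping slip should be corrected: the identity you call the key trick, $\alpha(b,v_h-u_{h\alpha g})_R=\alpha\lVert b-u_{h\alpha g}\rVert_R^2$ for $v_h\in K_h$, is false as written (the two sides differ by $\alpha(u_{h\alpha g},v_h-u_{h\alpha g})_R$), and the displayed decomposition in your limit passage has the same defect. The correct statement concerns the combined boundary term: moving the $\alpha(u_{h\alpha g},v_h-u_{h\alpha g})_R$ part of $a_\alpha$ to the right-hand side of (\ref{ec 2.3}), the net penalty term is $\alpha(b-u_{h\alpha g},v_h-u_{h\alpha g})_R$, which for $v_h=b$ on $\Gamma_1$ equals $\alpha\lVert b-u_{h\alpha g}\rVert_R^2\ge 0$ and may be discarded, leaving precisely the inequality in the form $a$ alone that you then pass to the limit. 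The same combination, together with $a(u_{h\alpha g},b)=0$ ($b$ constant) and the coercivity (\ref{ec 1.5}) of $a_1$ (valid on all of $V$, so no $h$-dependent trace or Poincar\'e inequality is needed), is what actually produces your a priori bounds from the test function $v_h=b$. With this correction your proof goes through.
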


\begin{proof}
Without loss of generality, we consider $\alpha > 1$ and we define $w= u_{h \alpha g} - u_{h g} \in V$.
By definition of $a_\alpha$, we have:
\[a_\alpha(w,w)-a_1(w,w)=(\alpha-1) \lVert w \rVert_R^2.\]

\noindent After mathematical work, we obtain that:

\begin{equation} \label{ec 3.18}
  a_1(w,w) \leq a_1(w,w)+(\alpha-1) \lVert w \rVert_R^2 \leq (g,w)_H - (q,w)_Q - a(u_{hg},w)
\end{equation}

\noindent and by coerciveness of $a_1$ it results that:

\[\lVert u_{h \alpha g} - u_{h g} \rVert_R^2 \leq \frac{C}{\alpha - 1}\]

\noindent and $u_{h \alpha g} \rightarrow u_{h g}\,\,$in $\,\,\Gamma_1,$ when $\alpha \rightarrow \infty$.

\noindent Moreover, as a consequence  of (\ref{ec 3.18}), we obtain that $\lVert u_{h \alpha g}\rVert_V \leq C$
(C constant independent of $\alpha$ and $h$). Then, there exist $\eta \in V$ such that

\[ u_{h \alpha g} \rightharpoonup \eta \,\, \text{in} \, V \,\, (\text{in $H$ strong}).\]

\noindent Then, the strong convergence in $V$ is obtained similarly to the one in Theorem $3.1$
\end{proof}

\begin{theorem} \label{Teo 3.5}

If, for each $h >0$ we choose $g_{op_{h \alpha}} \in H$ a solution of the optimal control problem $(P_{h \alpha})$
and consider its respective discrete state system $u_{h \alpha \,g_{op_{h \alpha}}} \in K_{+h}$ the solution of (\ref{ec 2.3}),
 we obtain that:

 \begin{equation} \label{ec 3.19}
    u_{h \alpha \,g_{op_{h \alpha}}} \rightarrow u_{h f_h} \,\,\, in \,\,\, V \,\,\, when \,\,\, \alpha \rightarrow \infty,
 \end{equation}

\noindent and

\begin{equation} \label{ec 3.20}
 g_{op_{h \alpha}} \rightarrow f_h  \,\,\, in \,\,\,  H \,\,\,\, when \,\,\, \alpha \rightarrow \infty.
\end{equation}

\noindent where $f_h \in H$ is a solution of the discrete optimal control problem  $(P_h)$ and $u_{h f_h}$ is its corresponding discrete state system
solution of the variational inequality (\ref{ec 3.16}).

\end{theorem}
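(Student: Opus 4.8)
The plan is to follow the now-familiar template used in the proofs of Theorems 3.2 and 3.4. First I would extract a priori bounds: fixing $h>0$ and taking $g=0$ as a competitor in the optimality inequality for $(P_{h\alpha})$ gives $J_{h\alpha}(g_{op_{h\alpha}})\le J_{h\alpha}(0)=\tfrac12\lVert u_{h\alpha 0}\rVert_H^2$, and since $\lVert u_{h\alpha 0}\rVert_H$ is bounded uniformly in $\alpha$ (this is essentially the bound from Theorem 3.4 applied to $g=0$), we get $\lVert u_{h\alpha g_{op_{h\alpha}}}\rVert_H\le C$ and $\lVert g_{op_{h\alpha}}\rVert_H\le C/\sqrt M$, both independent of $\alpha$. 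Then, testing the discrete variational inequality (\ref{ec 2.3}) for the control $g_{op_{h\alpha}}$ with $v_h=b\in K_{+h}$ and using coerciveness of $a_\alpha$ together with the argument of (\ref{ec 3.18}) in Theorem 3.4, I would obtain $\lVert u_{h\alpha g_{op_{h\alpha}}}\rVert_V\le C$ uniformly in $\alpha$, and also control of $\lVert u_{h\alpha g_{op_{h\alpha}}}-b\rVert_R^2\le C/(\alpha-1)\to 0$.

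Next, from these bounds there is a subsequence along which $g_{op_{h\alpha}}\rightharpoonup f_h$ in $H$ weak and $u_{h\alpha g_{op_{h\alpha}}}\rightharpoonup \zeta_h$ in $V$ weak, hence strongly in $H$; moreover $\zeta_h\in K_h$ because $\zeta_h\ge 0$ and the $R$-trace estimate forces $\zeta_h=b$ on $\Gamma_1$. I would then pass to the limit $\alpha\to\infty$ in (\ref{ec 2.3}): for any fixed $v_h\in K_h$ I use $v_h$ as test function (note $(v_h-u_{h\alpha g_{op_{h\alpha}}})_R\to 0$ so the term $\alpha(b-u_{h\alpha g_{op_{h\alpha}}},\cdot)_R$ stays bounded, as in Theorem 3.4), use lower weak semicontinuity of $a$ on $V$, and conclude that $\zeta_h$ solves (\ref{ec 3.16}) for the control $f_h$; by uniqueness (Theorem 3.3 $i$) this means $\zeta_h=u_{h f_h}$. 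The strong $V$-convergence $u_{h\alpha g_{op_{h\alpha}}}\to u_{h f_h}$ then follows exactly as in Theorem 3.1: subtract the two variational inequalities with appropriate test functions, use coerciveness of $a$, and drive the cross terms to zero using the $H$-strong convergences already obtained.

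It remains to identify $f_h$ as a solution of the discrete optimal control problem $(P_h)$, i.e. to prove optimality in the limit. For this I would show $J_h(f_h)\le J_h(g)$ for every $g\in H$. Given an arbitrary $g\in H$, optimality of $g_{op_{h\alpha}}$ for $(P_{h\alpha})$ gives $J_{h\alpha}(g_{op_{h\alpha}})\le J_{h\alpha}(g)=\tfrac12\lVert u_{h\alpha g}\rVert_H^2+\tfrac M2\lVert g\rVert_H^2$; by Theorem 3.4 the right side converges to $J_h(g)$, while on the left side $\lVert u_{h\alpha g_{op_{h\alpha}}}\rVert_H\to\lVert u_{h f_h}\rVert_H$ (strong $H$-convergence) and $\liminf_\alpha\lVert g_{op_{h\alpha}}\rVert_H\ge\lVert f_h\rVert_H$ by weak lower semicontinuity of the norm. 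Taking $\liminf$ through the inequality yields $J_h(f_h)\le J_h(g)$, so $f_h$ solves $(P_h)$. Finally, the strong convergence $g_{op_{h\alpha}}\to f_h$ in $H$ is upgraded from weak by a standard argument: weak convergence plus convergence of norms (which follows by choosing $g=f_h$ above to sandwich $\limsup_\alpha\lVert g_{op_{h\alpha}}\rVert_H\le\lVert f_h\rVert_H$) gives strong convergence. The main obstacle, as in the companion theorems, is the passage to the limit in the penalty term $\alpha(b-u_{h\alpha g_{op_{h\alpha}}},v_h-u_{h\alpha g_{op_{h\alpha}}})_R$: one must carefully use the bound $\lVert u_{h\alpha g_{op_{h\alpha}}}-b\rVert_R^2=O(1/\alpha)$ to see that this term does not blow up and in fact is handled correctly when $v_h\in K_h$ (so that $v_h=b$ on $\Gamma_1$), which is exactly what makes the limit inequality land in the right constraint set $K_h$.
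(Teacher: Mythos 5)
Your proposal is correct and follows essentially the same route as the paper's proof: the same a priori bounds (competitor $g=0$ in the optimality inequality, test function $v_h=b$ in (\ref{ec 2.3})), the same subsequence extraction and identification of the weak limit via uniqueness of the solution of (\ref{ec 3.16}), the same transfer of optimality using Theorem 3.4 together with lower weak semicontinuity, and the same upgrade from weak to strong convergence of the controls via convergence of norms. The only slight imprecision is that, when passing to the limit in (\ref{ec 2.3}), the penalty term $\alpha (b-u_{h \alpha g_{op_{h \alpha}}}, v_h-u_{h \alpha g_{op_{h \alpha}}})_R$ is not merely bounded but nonnegative for $v_h \in K_h$ (since $v_h=b$ on $\Gamma_1$), which is what allows it to be dropped from the one-sided inequality — a point you essentially acknowledge in your closing remark.
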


\begin{proof}

 As in Theorem 3.2, the inequalities (\ref{ec 3.8}) and (\ref{ec 3.9})hold. Now, considering
 $v_h= b$ in (\ref{ec 2.3})(and we take $\alpha >1$ without loss of generality) for the control $g_{op_{h \alpha}}$
 and $w_h= b- u_{h \alpha \,g_{op_{h \alpha}}}$,
 we obtain:

 \[a_\alpha(u_{h \alpha \,g_{op_{h \alpha}}},w_h) \geq (g_{op_{h \alpha}}, w_h)_H - (q,w_h)_Q + \alpha (b,w_h)_R \]

 \noindent that is to say:

\begin{equation} \label{ec 3.21}
   a_1(- w_h,w_h) + a_1(b,w_h) \geq (g_{op_{h \alpha}},w_h)_H -(q,w_h)_Q + (\alpha-1) \lVert w_h\rVert_R.
\end{equation}

 \noindent By the coerciveness of the application $a_1$, it results that:

\begin{equation}  \label{ec 3.22}
   \lVert u_{h \alpha \,g_{op_{h \alpha}}}\rVert_V \leq C \,\,\, \forall \, \alpha > 0.
\end{equation}

 \noindent Moreover,

 \begin{equation}\label{ec 3.23}
    \lVert u_{h \alpha \,g_{op_{h \alpha}}}\rVert_R \leq \frac{C}{\alpha-1} \,\,\, \forall \, \alpha > 0.
 \end{equation}

\noindent Then, there exists $f_h \in H$ and $\eta_h \in V$ (we can see that $\eta_h \in K_h$) such that

\begin{equation} \label{ec 3.24}
 g_{op_{h \alpha}} \rightharpoonup f_h \,\,\text{in}\,\, H
\end{equation}

\noindent and

\begin{equation}\label{ec 3.25}
 u_{h \alpha \,g_{op_{h \alpha}}} \rightharpoonup \eta_h \,\,\text{in} \,\,V\,\, \text{(in $H$ strong)}
\end{equation}

\noindent Let be $v_h \in K_h \subset K_{+h}$ and given $w_h=v_h - u_{h \alpha \,g_{op_{h \alpha}}}$, we have:

\[a_\alpha(u_{h \alpha \,g_{op_{h \alpha}}},w_h) \geq (g_{op_{h \alpha}}, w_h)_H - (q,w_h)_Q + \alpha(b,w_h)_R\]

\[a(u_{h \alpha \,g_{op_{h \alpha}}}, w_h) \geq (g_{op_{h\alpha}}, w_h)_H - (q, w_h)_Q + \alpha (b-u_{h \alpha \,g_{op_{h \alpha}}}, w_h)_R\]

\noindent and because of (\ref{ec 3.24}), (\ref{ec 3.25}) and similar arguments given in Theorem 3.2,
and the fact that the application $a$ is
semi-continuous in $V$ weak, we obtain that $\eta_h$ is a solution of (\ref{ec 3.16}) for the control $f_h$.
Then (by item (i) in Theorem 3.3), $\eta_h=u_{hf_h}.$

If we consider $v_h = u_{hf_h} \in K_h \subset K_{+h}$ in (\ref{ec 2.3}) for the control $g_{op_{h \alpha}} \in H$ and
$w_h = u_{hf_h} -u_{h \alpha \,g_{op_{h \alpha}}}$, then:

\[a_\alpha(u_{h \alpha \,g_{op_{h \alpha}}},w_h) \geq (g_{op_{h \alpha}}, w_h)_H - (q,w_h)_Q + \alpha (b,w_h)_R \]

\[a_1( w_h,w_h) \leq a_1(u_{hf_h},w_h) - (g_{op_{h \alpha}},w_h)_H + (q,w_h)_Q - \alpha (b,w_h)_R \,+ \]
\[(\alpha - 1) (u_{h \alpha g_{op_{h \alpha}}},w_h)_R.\]

\noindent Again, as consequence of the coerciveness of the application $a_1$ and by (\ref{ec 3.24}), (\ref{ec 3.25}), it results
(\ref{ec 3.19}).

Now we see that $f_h$ is a solution of (\ref{ec 2.8}): because of Theorem 2.2 (c),and by the definition of optimum:

\[J_h(f_h)\leq \lim_ {\alpha \rightarrow \infty} J_{h \alpha}(g_{op_{h \alpha}}) \leq \lim_ {\alpha \rightarrow \infty} J_{h \alpha}(g) \,\,\ \forall \, g\in H \]

\noindent and by Theorem 3.4 we conclude that
\[J_h(f_h)\leq J_h (g)  \,\,\ \forall \, g\in H. \]

\noindent Finally, we see that:

\[J_h(f_h)\leq \lim_ {\alpha \rightarrow \infty} J_{h \alpha}(g_{op_{h \alpha}}) \leq J_h(g) \,\,\ \forall \, g\in H \]

\noindent then, if we consider $g=f_h$: \[\lim_ {\alpha \rightarrow \infty} J_{h \alpha}(g_{op_{h \alpha}}) = J_h(f_h)\]

\noindent and, because (\ref{ec 3.19}),

\begin{equation} \label{ec 3.26}
 \lim_ {\alpha \rightarrow \infty} \lVert g_{op_{h \alpha}} \rVert_H = \lVert f \rVert_H.
\end{equation}

Then, by using (\ref{ec 3.24}), (\ref{ec 3.25}), we obtain (\ref{ec 3.20}).

\end{proof}

\noindent Now, following the idea given in \cite{TARZIA15} we have this final theorem:

\subsection{Double convergence when $(h, \alpha) \rightarrow (0^+,\infty)$}
\begin{theorem} \label{Teo 4.4} 

If, for each $h >0$ we choose $g_{op_{h \alpha}} \in H$ a solution of the optimal control problem $(P_{h \alpha})$
and we consider its respective discrete state system $u_{h \alpha \,g_{op_{h \alpha}}} \in K_{+h}$, which is the unique solution of (\ref{ec 2.3}),
 we obtain that:

  \begin{equation} \label{ec 3.27}
      u_{h \alpha \,g_{op_{h \alpha}}} \rightarrow u_{g_{op}} \,\,\, in \,\,\, V \, \,\, when \,\,\, (h,\alpha) \rightarrow (0^+,\infty),
   \end{equation}

   \noindent and

\begin{equation} \label{ec 3.28}
 g_{op_{h \alpha}} \rightarrow g_{op}  \,\,\, in \,\,\,  H \,\,\,\,\, when \,\,\, (h,\alpha) \rightarrow (0^+, \infty).
\end{equation}

\noindent where $ g_{op}\in H$ is the solution of the optimal control problem  $(P)$ and $u_{g_{op}}$ is its corresponding state system
solution of the variational inequality (\ref{ec 3.13}).

\end{theorem}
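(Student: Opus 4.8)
The plan is to mimic the proofs of Theorems~\ref{Teo 3.2} and~\ref{Teo 3.5}, but carrying out the two limits \emph{simultaneously}. First I would collect a priori bounds uniform in \emph{both} $h$ and $\alpha$: testing the optimality relation (\ref{ec 2.8}) with $g=0$ and using $\lVert u_{h\alpha 0}\rVert_H\le C$ (uniformly in $h$ and $\alpha$), one gets, as in (\ref{ec 3.8})--(\ref{ec 3.9}), that $\lVert u_{h\alpha\,g_{op_{h\alpha}}}\rVert_H\le C$ and $\lVert g_{op_{h\alpha}}\rVert_H\le C/\sqrt{M}$. Then, choosing $v_h=b\in K_{+h}$ in (\ref{ec 2.3}) for the control $g_{op_{h\alpha}}$, using $a(\cdot,b)=0$ (as $b$ is constant) and the coerciveness of $a_1$ (we may assume $\alpha>1$), I would obtain, as in (\ref{ec 3.22})--(\ref{ec 3.23}), that $\lVert u_{h\alpha\,g_{op_{h\alpha}}}\rVert_V\le C$ and $\lVert u_{h\alpha\,g_{op_{h\alpha}}}-b\rVert_R^2\le C/(\alpha-1)$ with $C$ independent of $h$ and $\alpha$; the second estimate records the degeneration of the Robin condition into the Dirichlet condition $u=b$ on $\Gamma_1$.

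Next, fix any sequence $(h_n,\alpha_n)\to(0^+,\infty)$. By the bounds above there are $f\in H$ and $\eta\in V$ such that, along a subsequence, $g_{op_{h_n\alpha_n}}\rightharpoonup f$ in $H$ and $u_{h_n\alpha_n\,g_{op_{h_n\alpha_n}}}\rightharpoonup\eta$ in $V$, hence strongly in $H$; moreover $\eta\ge0$ and the trace estimate forces $\eta=b$ on $\Gamma_1$, so $\eta\in K$. To identify $\eta$, given $v\in K$ I would take $v_h=\Pi_h(v)$, which lies in $K_h\subset K_{+h}$ (since $v=b$ on $\Gamma_1$ and $b$ is constant) and converges to $v$ in $V$; writing $a_\alpha=a+\alpha(\cdot,\cdot)_R$ in (\ref{ec 2.3}) and using that $(u_{h\alpha\,g_{op_{h\alpha}}}-b,\,v_h-u_{h\alpha\,g_{op_{h\alpha}}})_R=-\lVert u_{h\alpha\,g_{op_{h\alpha}}}-b\rVert_R^2\le0$ (because $v_h=b$ on $\Gamma_1$), the inequality reduces to $a(u_{h\alpha\,g_{op_{h\alpha}}},\,v_h-u_{h\alpha\,g_{op_{h\alpha}}})\ge(g_{op_{h\alpha}},\,v_h-u_{h\alpha\,g_{op_{h\alpha}}})_H-(q,\,v_h-u_{h\alpha\,g_{op_{h\alpha}}})_Q$. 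Passing to the limit along the subsequence (weak lower semicontinuity of $a$ on the quadratic term; strong $H$-convergence of the states and trace convergence on $\Gamma_2$ for the linear terms) yields $a(\eta,v-\eta)\ge(f,v-\eta)_H-(q,v-\eta)_Q$ for all $v\in K$, hence $\eta=u_f$ by uniqueness of the solution of (\ref{ec 3.13}).

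It remains to show $f=g_{op}$ and to obtain strong convergence. From $J_{h\alpha}(g_{op_{h\alpha}})\le J_{h\alpha}(g)$ for all $g\in H$, the strong $H$-convergence of the states and the weak lower semicontinuity of $\lVert\cdot\rVert_H$ give $\liminf J_{h_n\alpha_n}(g_{op_{h_n\alpha_n}})\ge\tfrac{1}{2}\lVert u_f\rVert_H^2+\tfrac{M}{2}\lVert f\rVert_H^2=J(f)$; on the other hand, for each fixed $g\in H$ one has $J_{h\alpha}(g)\to J(g)$, since $u_{h\alpha g}\to u_g$ strongly in $H$ as $(h,\alpha)\to(0^+,\infty)$ --- the fixed-control analogue of the previous paragraph (uniform bounds, weak limit, identification via uniqueness of (\ref{ec 3.13})). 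Hence $J(f)\le J(g)$ for every $g\in H$, so $f$ minimizes $(P)$ and, by uniqueness of its optimal control, $f=g_{op}$ and $\eta=u_{g_{op}}$; since every subsequence gives the same limit, the whole family converges weakly. For the strong convergence in $V$ I would repeat the coerciveness argument of Theorem~\ref{Teo 3.5}: take $v_h=\Pi_h(u_{g_{op}})\in K_h\subset K_{+h}$ in (\ref{ec 2.3}) for the control $g_{op_{h\alpha}}$, set $w_{h\alpha}=\Pi_h(u_{g_{op}})-u_{h\alpha\,g_{op_{h\alpha}}}$, write $a_\alpha=a_1+(\alpha-1)(\cdot,\cdot)_R$ and use $\Pi_h(u_{g_{op}})=b$ on $\Gamma_1$ to absorb the boundary terms; the key facts are $\lVert w_{h\alpha}\rVert_R^2=\lVert b-u_{h\alpha\,g_{op_{h\alpha}}}\rVert_R^2\le C/(\alpha-1)\to0$, $\Pi_h(u_{g_{op}})\to u_{g_{op}}$ in $V$, and $w_{h\alpha}\to0$ in $H$. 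This gives $a_1(w_{h\alpha},w_{h\alpha})\to0$, hence (\ref{ec 3.27}); then $J_{h\alpha}(g_{op_{h\alpha}})\to J(g_{op})$ together with (\ref{ec 3.27}) forces $\lVert g_{op_{h\alpha}}\rVert_H\to\lVert g_{op}\rVert_H$, which with the weak convergence yields (\ref{ec 3.28}).

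The main obstacle is that the two limits must be taken jointly rather than iterated, so every constant appearing in the a priori estimates and in the final coerciveness step has to be genuinely independent of both $h$ and $\alpha$; one must also track carefully the three constraint sets $K_h\subset K_{+h}$ and $K$, because the Dirichlet condition on $\Gamma_1$ is present only in the limit and is felt at finite $\alpha$ solely through the penalty term $\alpha\lVert u_{h\alpha g}-b\rVert_R^2$. A secondary, essentially self-contained, difficulty is the sub-result invoked to identify $f$ --- strong $H$-convergence of the state for a fixed control under $(h,\alpha)\to(0^+,\infty)$ --- which is a miniature version of the whole theorem.
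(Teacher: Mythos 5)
Your proposal is correct and follows essentially the same route as the paper's proof: uniform a priori bounds in both $h$ and $\alpha$ obtained from the optimality with $g=0$ and from testing (\ref{ec 2.3}) with $v_h=b$, extraction of weak limits $g^*$ and $u^*$ with $u^*=b$ on $\Gamma_1$, identification of $u^*$ as the solution of (\ref{ec 3.13}) by passing to the limit with $v_h=\Pi_h(v)$ after discarding the nonnegative Robin penalty term, identification of the limit control through lower semicontinuity of the cost, strong $V$-convergence via coerciveness with the interpolated test function, and finally norm convergence of the controls to upgrade (\ref{ec 3.28}) to strong convergence. You merely make explicit two points the paper leaves implicit --- the fixed-control double convergence $J_{h\alpha}(g)\to J(g)$ used to identify the limit as a minimizer of $(P)$, and the exact cancellation of the $\alpha$-weighted boundary terms --- and, like the paper, you tacitly use uniqueness of the optimal control of $(P)$ (and the admissibility of $\Pi_h$ on elements of $K$) to pin down the limit, which is consistent with the paper's level of rigor.
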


\begin{proof}
 As in Theorem $3.2$, we have (\ref{ec 3.8}), (\ref{ec 3.9}) and (3.10)
and, in consequence, there exist $u^* \, \in \,V$ with $u^*/\Gamma_1=b$ and $g^* \in H$ such that:

\begin{equation}  \label{ec 3.29}
  u_{h \alpha g_{op_{h \alpha}}} \rightharpoonup u^* \hspace{1cm}(strong \quad in \quad H)
\end{equation}

\noindent and

\begin{equation}  \label{ec 3.30}
  g_{op_{h \alpha}} \rightharpoonup g^*
\end{equation}

\noindent when $(h,\alpha) \rightarrow (0^+,\infty)$ in both cases. Let be $v \in K$ such that $v/\Gamma_1=b$.
We consider $v_h=\Pi_h(v) \in \,K_{+h}$ in the state system (\ref{ec 2.3}) and we define
$w_h=v_h-u_{h \alpha g_{op_{h \alpha}}}$. Then we obtain

\[a(u_{h \alpha g_{op_{h \alpha}}}, w_h) \geq (g_{op_{h \alpha}}, w_h)_H - (q, w_h)_Q.\]

\noindent Because the application $a$ is semi-continuous weak in $V$ and $w_h \longrightarrow v-u^*$ in $H$ when $(h,\alpha) \rightarrow (0, \infty)$,
it results that $u^*$ is solution of (\ref{ec 3.13}). But this problem has unique solution, then we conclude that $u^*= u_{g^*}$
Moreover, we have that:
\[a_\alpha(u_{h\alpha g_{op_{h \alpha}}}-u_{g^*},u_{h\alpha g_{op_{h \alpha}}}-u_{g^*}) \leq (g_{op_{h \alpha}}, u_{h\alpha g_{op_{h \alpha}}} - \Pi(u_{g^*}))_H\]
\[+ (q, u_{h\alpha g_{op_{h \alpha}}} - \Pi(u_{g^*}))_Q + \alpha (b, u_{h\alpha g_{op_{h \alpha}}} - \Pi(u_{g^*}))_R - a_\alpha(u_{g^*},u_{h\alpha g_{op_{h \alpha}}}- \Pi(u_{g^*}) )\]
\[ + a_\alpha(u_{h\alpha g_{op_{h \alpha}}}, \Pi(u_{g^*})-u_{g^*}) - a_\alpha(u_{g^*}, \Pi(u_{g^*})-u_{g^*}).\]

Beacause the coerciveness of the application $a_\alpha $ in $V$ and by (\ref{ec 3.29}) and (\ref{ec 3.30}), we obtain (\ref{ec 3.27}) when
$(h,\alpha) \rightarrow (0^+,\infty).$

As the functional $J_{h\alpha}$ is lower weakly semi-continuous in $H$ (Theorem $2.2$) and (\ref{ec 3.30}) we obtain that $g_{op_{h \alpha}} \rightharpoonup g_{op}$.

\noindent  We also have that $ lim_{(h,\alpha)\rightarrow (0,
\infty)} J_{h \alpha} (g_{op_{h \alpha}}) = J(g_{op})$, and then

\noindent $ lim_{(h,\alpha)\rightarrow (0, \infty)} \lVert g_{op_{h
\alpha}} \rVert_H = \lVert g_{op}\rVert_H$, and by (\ref{ec 3.30}),
(\ref{ec 3.28}) the thesis holds.
\end{proof}

\newpage

\section{Conclusion}

In conclusion, by using the previous results given in \cite{BOUKROUCHE12}, and \cite{OLGUIN} we obtain the following commutative diagram among the two
continuous optimal control problems $(P)$ and $(P_\alpha)$, and two discrete optimal control problems $(P_h)$ and $(P_{h\alpha})$ when $h\rightarrow 0$,
 $\alpha\rightarrow \infty$ and $(h,\alpha) \rightarrow (0^+, \infty)$, which can be summarized by the following figure (Fig. 1):

 \begin{figure}[!ht]
 \begin{center}
  \includegraphics[width=16.5cm]{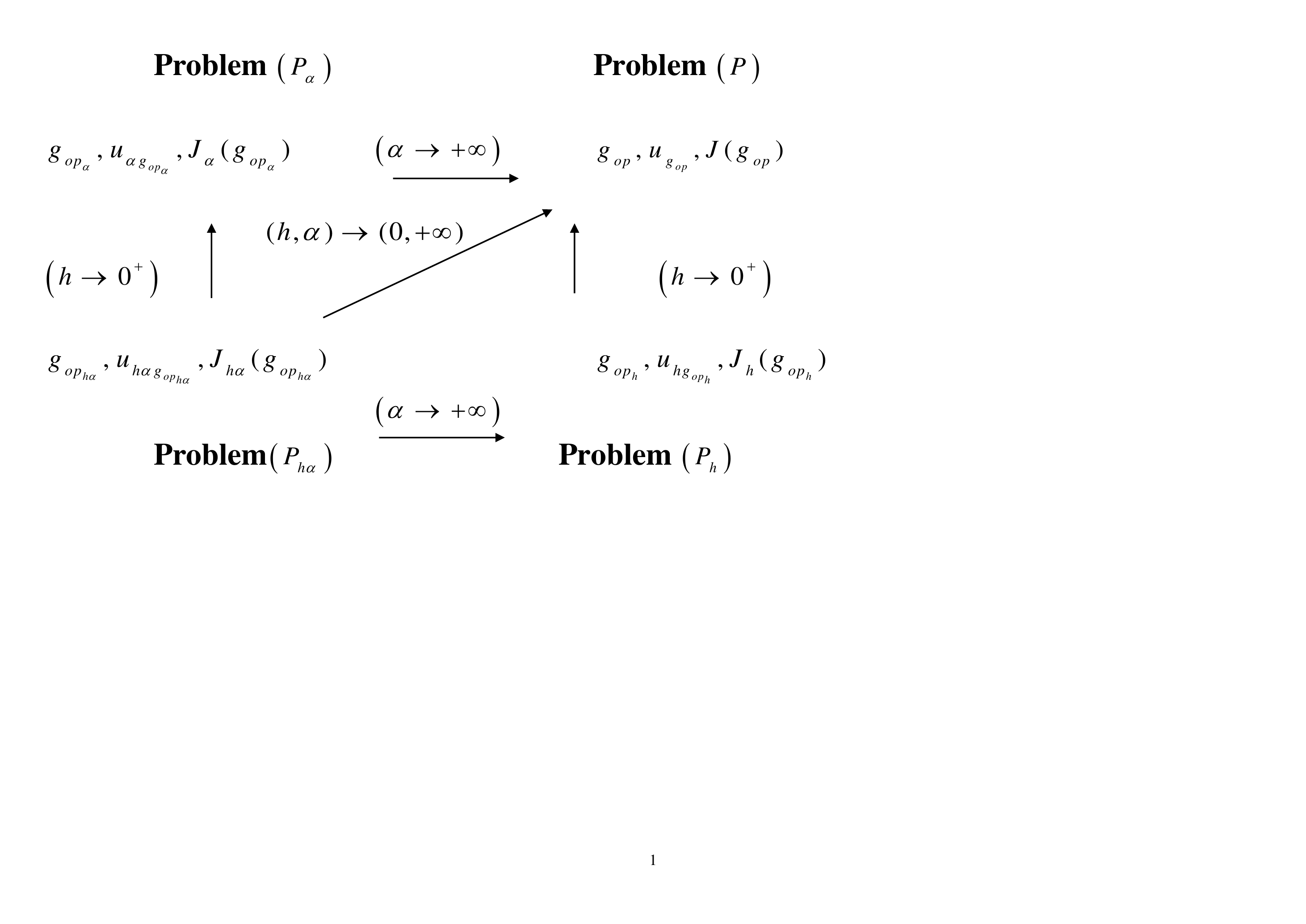}
 \end{center}
 \end{figure}
 \vspace{-5cm}

\begin{center}
 \emph{Fig. 1: Complete diagram for two continuous and two discrete optimal control and
the associated state system solutions}
\end{center}

\vspace{2cm}

\end{document}